\documentclass[12pt]{amsart}
\usepackage{amssymb,times,epsfig}

\makeatletter
\def\@strippedMR{}
\def\@scanforMR#1#2#3\endscan{
  \ifx#1M\ifx#2R\def\@strippedMR{#3}
  \else\def\@strippedMR{#1#2#3}
  \fi\fi}
\renewcommand\MR[1]{\relax\ifhmode\unskip\spacefactor3000 \space\fi
  \@scanforMR#1\endscan
  MR\MRhref{\@strippedMR}{\@strippedMR}}
\makeatother

\addtolength{\textwidth}{+4cm}
\addtolength{\textheight}{+2cm}
\hoffset-2cm
\voffset-1cm
\hfuzz2pt
\vfuzz2pt

\newtheorem*{Thm*}{Theorem}
\newtheorem{Thm}{Theorem}
\newtheorem{Cor}[Thm]{Corollary}
\newtheorem{Prop}[Thm]{Proposition}
\newtheorem{Lemma}[Thm]{Lemma}

\theoremstyle{definition}
\newtheorem{Defn}{Definition}

\newtheorem{Remark}[Defn]{Remark}
\newtheorem{Example}[Defn]{Example}
\newtheorem{Defn-Remark}[Defn]{Definition and Remark}

\newcommand{\mf}[1]{\mathbb{#1}}
\newcommand{\mc}[1]{\mathcal{#1}}
\newcommand{\mb}[1]{\mathbf{#1}}

\DeclareMathOperator{\NC}{\mathit{NC}}

\DeclareMathOperator{\Int}{\mathit{Int}}
\DeclareMathOperator{\Inner}{\mathit{Inner}}
\DeclareMathOperator{\Outer}{\mathit{Outer}}
\DeclareMathOperator{\Sing}{\mathit{Sing}}
\DeclareMathOperator{\SC}{\mathit{SC}}

\newcommand{\abs}[1]{\left\vert#1\right\vert}
\newcommand{\chf}[1]{\mathbf{1}_{#1}}
\newcommand{\set}[1]{\left\{#1\right\}}
\newcommand{\ip}[2]{\left \langle #1, #2 \right \rangle}
\newcommand{\state}[1]{\varphi \left[ #1 \right]}
\newcommand{\State}[1]{\Phi \left[ #1 \right]}
\renewcommand{\phi}{\varphi}

\newcommand{\Cum}[2]{R^{#1} \left[ #2 \right]}
\newcommand{\CumFun}[2]{R^{#1} \left( #2 \right)}

\newcommand{\eps}{\varepsilon}

\newcommand{\br}{\medskip\noindent}

\allowdisplaybreaks[1]

\title{Free evolution on algebras with two states}
\author[M.~Anshelevich]{Michael Anshelevich}
\thanks{This work was supported in part by NSF grant DMS-0613195}
\address{Department of Mathematics, Texas A\&M University, College Station, TX 77843-3368}
\email{manshel@math.tamu.edu}
\subjclass[2000]{Primary 46L53; Secondary 46L54}
\date{\today}

\begin{document}

\begin{abstract}
The key result in the paper concerns two transformations, $\Phi: (\rho, \psi) \mapsto \phi$ and $\mf{B}_{t}: \psi \mapsto \phi$, where $\rho, \psi, \phi$ are states on the algebra of non-commutative polynomials, or equivalently joint distributions of $d$-tuples of non-commuting operators. These transformations are related to free probability: if $\boxplus$ is the free convolution operation, and $\set{\rho_t}$ is a free convolution semigroup, we show that
\[
\State{\rho, \psi \boxplus \rho_t} = \mf{B}_{t} [\State{\rho, \psi}].
\]
The maps $\set{\mf{B}_t}$ were introduced by Belinschi and Nica as a semigroup of transformations such that $\mf{B}_1$ is the bijection between infinitely divisible distributions in Boolean and free probability theories. They showed that for $\gamma_t$ the free heat semigroup and $\Phi$ the Boolean version of the Kolmogorov representation for infinitely divisible measures,
\[
\State{\psi \boxplus \gamma_t} = \mf{B}_t[\State{\psi}].
\]
The more general map $\State{\rho, \psi}$ comes, not from free probability, but from the theory of two-state algebras, also called the conditionally free probability theory, introduced by Bo{\.z}ejko, Leinert, and Speicher. Orthogonality of the c-free versions of the Appell polynomials, investigated in \cite{AnsAppell3}, is closely related to the map $\Phi$. On the other hand, more general free Meixner families behave well under all the transformations above, and provide clues to their general behavior. Besides the evolution equation, other results include the positivity of the map $\State{\rho, \psi}$ and descriptions of its fixed points and range.
\end{abstract}

\maketitle

\section{Introduction}

\noindent
In a series of papers \cite{Belinschi-Nica-Eta,Belinschi-Nica-B_t,Belinschi-Nica-Free-BM}, Belinschi and Nica introduced and investigated a family of transformations $\mf{B}_t$. These transformations operate on measures, or more generally on ``non-commutative joint distributions'', i.e. states on non-commutative polynomial algebras. $\mf{B}_t$ are defined using a mixture of free and Boolean convolutions (see Section~\ref{Subsec:Convolutions}). They showed that these transformations form a semigroup, and for $t=1$, $\mf{B}_1$ is exactly the (Bercovici-Pata) bijection $\mf{B}$ between the infinitely divisible distributions in the free and Boolean probability theories. These transformations also have a remarkable relation to the free multiplicative convolution. On the other hand, Belinschi and Nica also proved that for a general state $\psi$,
\[
\State{\psi \boxplus \gamma_t} = \mf{B}_t[\State{\psi}],
\]
where $\boxplus$ is the free (additive) convolution and $\gamma_t$ is the free convolution semigroup of semicircular distributions, the free version of the heat semigroup. The map $\Phi$, which intertwines the actions of $\mf{B}_t$ and the semicircular evolution, can be described in the one-variable case as follows: for a measure $\nu$, $\State{\nu}$ is the measure whose Boolean cumulant generating function is
\[
\eta^{\State{\nu}}(z) = \int_{\mf{R}} \left( \frac{1}{1 - x z} - 1 - x z \right) \frac{1}{x^2} \,d\nu(x).
\]
The formula above is the Boolean version of the Kolmogorov representation (see Theorem~8.5 in \cite{Dur}). $\Phi$ is also a shift on the Jacobi parameters of the measure, see \cite{AnsAppell3}.

\br
Free probability \cite{VDN,Nica-Speicher-book} and Boolean probability theories are two of only three natural non-commutative probability theories (in addition to the usual probability theory; the third theory, monotone probability, also appears in the paper as an auxiliary device). Both the free and the Boolean setting are in fact particular cases of a more general construction for a space with \emph{two} expectations, or more precisely an algebra with two states $(\mc{A}, \phi, \psi)$. This theory is usually called the conditionally free or c-free theory. The objects in it are best described using their two-state cumulants $R^{\phi, \psi}$, which restrict to Boolean cumulants $\eta^\phi$ for $\psi = \delta_0$ and to free cumulants $R^\phi$ for $\psi = \phi$. It is not true that $R^{\phi, \psi}$ are always the free cumulants of some state. However, conversely, if $\psi$ is arbitrary and $\rho$ is a freely infinitely divisible state, then $R^\rho = R^{\phi, \psi}$ for some $\phi$. So we can define the map $\phi = \State{\rho, \psi}$. It is easy to see that $\State{\gamma, \cdot} = \Phi$, where $\gamma$ is the free product of standard semicircular distributions.  Moreover, we show that, now for general $\rho_t$,
\[
\State{\rho, \psi \boxplus \rho_t} = \mf{B}_t[\State{\rho, \psi}].
\]
This equation remains somewhat mysterious. However, we also exhibit an operator representation which provides a realization for it. This requires obtaining operator representations for all the ingredient maps, a result which may be of independent interest.

\br
Our original motivation for the study of the maps $\mf{B}_t$ and $\Phi$ came from their connection to the free Meixner distributions and states. Free Meixner distributions were originally defined as measures whose orthogonal polynomials have a special (free Sheffer) form, and in \cite{AnsBoolean} we showed that they have exactly the same characterization in terms of Boolean Sheffer families. In fact, as explained in \cite{AnsAppell3}, the natural home of the free Meixner distributions is in the c-free theory: they are those distributions $\phi$ which, for some $\psi$, have orthogonal c-free Appell (rather than the more general Sheffer) polynomials.

\br
It was noted by Belinschi and Nica and also in \cite{AnsBoolean} that the operations $\mf{B}_t$ take the class of free Meixner states to itself. We introduce a new two-parameter family $\set{\mf{B}_{\alpha, t}}$, which also form a semigroup. For $\alpha = 0$, these are exactly the Belinschi-Nica maps, while for $t=0$ these maps relate the Boolean convolution with the so-called Fermi convolution. These maps also preserve the free Meixner class, and moreover, in one variable using them the class can be generated from a single distribution $\frac{1}{2} (\delta_{-1} + \delta_1)$. In addition,
\[
\State{\rho, \psi \boxplus \rho_t \boxplus \delta_\alpha} = \mf{B}_{\alpha, t}[\State{\rho, \psi}],
\]
The actions on the free Meixner distributions of the operations $\boxplus$, $\mf{B}_{\alpha, t}$, and $\State{\rho, \cdot}$ can be computed explicitly, and lead to the more general results.

\br
The paper is organized as follows. Section~\ref{Section:Preliminaries} introduces the basic notions, in particular the two-state cumulants. Section~\ref{Section:Meixner} describes the appearance of the free Meixner distributions in the c-free theory, and defines $\State{\rho, \psi}$ and $\mf{B}_{\mb{a}, t}$. Section~\ref{Section:Transformations} contains the main results: semigroup property of $\mf{B}_{\mb{a}, t}$, positivity, fixed point, and image descriptions for $\State{\rho, \psi}$, and combinatorial and operator proofs of the equation which relates them.

\br
\textbf{Acknowledgements.}
I would like to thank Andu Nica and Serban Belinschi for a number of discussions which contributed to the development of this paper, and for explaining their work to me. I also thank Laura Matusevich for useful comments.

\section{Preliminaries}
\label{Section:Preliminaries}

\noindent
We will freely use the notions and notation from the Preliminaries section of \cite{AnsBoolean}; here we list the highlights.

\subsection{Polynomials and power series}
Let $\mf{C}\langle \mb{x} \rangle = \mf{C}\langle x_1, x_2, \ldots, x_d \rangle$ be all the polynomials with complex coefficients in $d$ non-com\-mu\-ting variables. They form a unital $\ast$-algebra.

\br
For a non-commutative power series $G$ in
\[
\mb{z} = (z_1, z_2, \ldots, z_d)
\]
define the left non-commutative partial derivative $D_i G$ by a linear extension of $D_i(1) = 0$,
\[
D_i z_{\vec{u}} = \delta_{i u(1)} z_{u(2)} \ldots z_{u(n)}.
\]

\subsection{Algebras and states}
Algebras $\mc{A}$ in this paper will always be complex $\ast$-algebras and, unless stated otherwise, unital. If the algebra is non-unital, one can always form its unitization $\mf{C} 1 \oplus \mc{A}$; if $\mc{A}$ was a $C^\ast$-algebra, its unitization can be made into one as well.

\br
Functionals $\mc{A} \rightarrow \mf{C}$ will always be linear, unital, and $\ast$-compatible. A state is a functional which in addition is positive definite, that is
\[
\state{X^\ast X} \geq 0
\]
(zero value for non-zero $X$ is allowed). A functional on $\mf{C} 1 \oplus \mc{A}$ is \emph{conditionally positive definite} if its restriction to $\mc{A}$ is positive definite. In particular, a functional on $\mf{C} \langle \mb{x} \rangle$ is conditionally positive definite if it is positive definite on polynomials without constant term.

\br
Most of the time we will be working with states on $\mf{C} \langle \mb{x} \rangle$ arising as joint distributions. For
\[
X_1, X_2, \ldots, X_d \in \mc{A}^{sa},
\]
their joint distribution with respect to $\psi$ is a state on $\mf{C} \langle \mb{x} \rangle$ determined by
\[
\state{P(\mb{x})} = \psi^{X_1, X_2, \ldots, X_d}\left[P(x_1, x_2, \ldots, x_d)\right] = \psi \left[P(X_1, X_2, \ldots, X_d)\right].
\]
The numbers $\state{x_{\vec{u}}}$ are the moments of $\phi$. More generally, for $d$ non-commuting indeterminates $\mb{z} = (z_1, \ldots, z_d)$, the series
\[
M(\mb{z}) = \sum_{\vec{u}} \state{x_{\vec{u}}} z_{\vec{u}}
\]
is the moment generating function of $\phi$.

\br
For a probability measure $\mu$ on $\mf{R}$ all of whose moments are finite, its monic orthogonal polynomials $\set{P_n}$ satisfy three-term recursion relations
\begin{equation}
\label{Three-term-recursion}
x P_n(x) = P_{n+1}(x) + \beta_n P_n(x) + \gamma_n P_{n-1}(x),
\end{equation}
with initial conditions $P_{-1} = 0$, $P_0 = 1$. We will call the parameter sequences
\[
(\beta_0, \beta_1, \beta_2, \ldots), (\gamma_1, \gamma_2, \gamma_3, \ldots)
\]
the Jacobi parameter sequences for $\mu$. Generalizations of such parameters for states with MOPS were found in \cite{AnsMulti-Sheffer}.

\subsection{Partitions}
We will denote the lattice of non-crossing partitions of $n$ elements by $\NC(n)$, the corresponding lattice of interval partitions by $\Int(n)$. A class $B \in \pi$ of a non-crossing partition is inner if for $j \in B$,
\[
\exists \ i \stackrel{\pi}{\sim} k \stackrel{\pi}{\not \sim} j: i < j < k,
\]
otherwise $B$ is outer. The collection of all the inner classes of $\pi$ will be denoted $\Inner(\pi)$, and similarly for $\Outer(\pi)$.

\br
From \cite{AnsFree-Meixner}, we take the following notation: for a set $V$,
\[
\NC'(V) = \set{\pi \in \NC(V): \min(V) \stackrel{\pi}{\sim} \max(V)}.
\]
For $\pi \in \NC'(V)$, we will denote $B^o(\pi)$ the unique outer class of $\pi$.

\br
For comparison, we also recall the notation from \cite{Belinschi-Nica-Eta}: for $\pi, \sigma \in \NC(n)$, denote $\pi \ll \sigma$ if
\[
\pi \leq \sigma, \text{ i.e. } i \stackrel{\pi}{\sim} j \Rightarrow i \stackrel{\sigma}{\sim} j
\]
but
\[
\forall B \in \sigma, \min(B) \stackrel{\pi}{\sim} \max(B).
\]
In particular, $\NC'(n) = \set{\pi \in \NC(n): \pi \ll \hat{1}_n}$.

\subsection{Cumulants}
For a state $\phi$, its Boolean cumulant generating function is defined by
\[
\eta^\phi(\mb{z}) = 1 - (1 + M^\phi(\mb{z}))^{-1},
\]
and its coefficients are the Boolean cumulants of $\phi$. They can also be expressed in terms of moments of $\phi$ using the lattice of interval partitions. Similarly, the free cumulant generating function of a state $\psi$ is defined by the implicit equation
\begin{equation}
\label{R-M-w}
M^\psi(\mb{w}) = \CumFun{\psi}{(1 + M^\psi(\mb{w})) \mb{w}},
\end{equation}
where
\[
(1 + M^\psi(\mb{w})) \mb{w} = \Bigl( (1 + M^\psi(\mb{w})) w_1, (1 + M^\psi(\mb{w})) w_2, \ldots, (1 + M^\psi(\mb{w})) w_d \Bigr).
\]
We will frequently, sometimes without comment, use the change of variables
\begin{equation}
\label{z-M-w}
z_i = (1 + M^\psi(\mb{w})) w_i , \qquad w_i = (1 + \CumFun{\psi}{\mb{z}})^{-1} z_i.
\end{equation}
The coefficients of $\CumFun{\psi}{\mb{z}}$ are the free cumulants of $\psi$, and can also be expressed in terms of the moments of $\psi$ using the lattice of non-crossing partitions:
\begin{equation}
\label{R-cumulant-moment}
\psi[x_1 \ldots x_n] = \sum_{\pi \in NC(n)} \prod_{B \in \pi} \Cum{\psi}{x_i : i \in B}.
\end{equation}

\subsection{Two-state cumulants}
The typical setting in this paper will be a triple $(\mc{A}, \phi, \psi)$, where $\mc{A}$ is an algebra and $\phi, \psi$ are functionals on it. By rotation, we can assume without loss of generality that $\phi$ is normalized to have zero means and identity covariance. Then no such assumptions can be made on $\psi$.

\br
We define the conditionally free cumulants of the pair $(\phi, \psi)$ via
\[
\state{x_1 \ldots x_n} = \sum_{\pi \in \NC(n)} \prod_{B \in \Outer(\pi)} \Cum{\phi, \psi}{\prod_{i \in B} x_i} \prod_{C \in \Inner(\pi)} \Cum{\psi}{\prod_{j \in C} x_j}.
\]
Their generating function is
\[
\CumFun{\phi, \psi}{\mb{z}} = \sum_{\vec{u}} \Cum{\phi, \psi}{x_{\vec{u}}} z_{\vec{u}}.
\]
Equivalently (up to changes of variables, this is Theorem~5.1 of \cite{BLS96}), we could have defined the conditionally free cumulant generating function via the condition
\begin{equation}
\label{eta-M-R}
\eta^\phi(\mb{w}) = (1 + M^\psi(\mb{w}))^{-1} \CumFun{\phi, \psi}{(1 + M^\psi(\mb{w})) \mb{w}}.
\end{equation}

\br
For elements $X_1, X_2, \ldots, X_n \in \mc{A}^{sa}$, we will denote their joint cumulants
\[
\Cum{\phi, \psi}{X_1, X_2, \ldots, X_n} = \Cum{\phi^{X_1, X_2, \ldots, X_n}, \psi^{X_1, X_2, \ldots, X_n}}{x_1, x_2, \ldots, x_n}
\]
to be the corresponding joint cumulants with respect to their joint distributions.

\begin{Defn}
\label{Defn:c-free}
Let $(\mc{A}, \phi, \psi)$ be an algebra with two states.
\begin{enumerate}
\item
Subalgebras $\mc{A}_1, \ldots, \mc{A}_d \subset \mc{A}$ are conditionally free, or c-free, with respect to $(\phi, \psi)$ if for any $n \geq 2$,
\[
a_i \in \mc{A}_{u(i)}, \quad i = 1, 2, \ldots, n, \qquad u(1) \neq u(2) \neq \ldots \neq u(n),
\]
the relation
\[
\psi[a_1] = \psi[a_2] = \ldots = \psi[a_n] = 0
\]
implies
\begin{equation}
\label{centered-product}
\state{a_1 a_2 \ldots a_n} = \state{a_1} \state{a_2} \ldots \state{a_n}.
\end{equation}
\item
The subalgebras are $(\phi | \psi)$ free if for $a_1, a_2, \ldots, a_n \in \bigcup_{j=1}^d \mc{A}_j$,
\[
\Cum{\phi, \psi}{a_1, a_2, \ldots, a_n} = 0
\]
unless all $a_i$ lie in the same subalgebra.
\end{enumerate}
\end{Defn}

\noindent
As pointed out in \cite{Boz-Bryc-Two-states}, these properties are \emph{not} equivalent, however they become equivalent under the extra requirement that the subalgebras are $\psi$-freely independent. In any case, throughout the paper we will be working with cumulants and will not actually encounter conditional freeness.

\begin{Example}
If $a, c$ are c-free from $b$, then (Lemma 2.1 of \cite{BLS96})
\[
\state{a b} = \state{a} \state{b},
\]
\[
\state{a b c}
= \state{a} \state{b} \state{c} + \left( \state{a c} - \state{a} \state{c} \right) \psi[b].
\]
\end{Example}

\begin{Example}
\label{Example:Conditional-freeness}
The following are important particular cases of conditional freeness.
\begin{enumerate}
\item
If $\phi = \psi$, so that $(\mc{A}, \phi)$ is an algebra with a single state, conditional freeness with respect to $(\phi, \phi)$ is the same as free independence with respect to $\phi$. Moreover, $R^{\phi, \phi} = R^\phi$.
\item
If $\mc{A}$ is a non-unital algebra, define a state $\delta_0$ on its unitization $\mf{C} 1 \oplus \mc{A}$ by $\delta_0[1] = 1$, $\delta_0[\mc{A}] = 0$. Then conditional freeness of subalgebras $(\mf{C} 1 \oplus \mc{A}_1), \ldots, (\mf{C} 1 \oplus \mc{A}_d)$ with respect to $(\phi, \delta_0)$ is the same as Boolean independence of subalgebras $\mc{A}_1, \ldots, \mc{A}_d$ with respect to $\phi$. Moreover, $R^{\phi, \delta_0} = \eta^\phi$.
\item
Specializing the preceding example, if $\mc{A} = \mf{C} \langle \mb{x} \rangle$, it is a unitization of the algebra of polynomials without constant term, and $\delta_0[P]$ is the constant term of a polynomial, so that we denote, even for non-commuting polynomials,
\[
\delta_0[P] = P(0).
\]
\item
More generally, for any $\mb{a} \in \mf{R}^d$, we can define a state on $\mf{C} \langle \mb{x} \rangle$ by
\[
\delta_{\mb{a}}[P] = P(a_1, a_2, \ldots, a_d).
\]
These are exactly the multiplicative $\ast$-linear functionals on $\mf{C} \langle \mb{x} \rangle$. Note that $\delta_{\mb{a}}$ is the free product of $\delta_{a_i}$, so it is indeed a state, and here and in all the preceding examples, the two parts of Definition~\ref{Defn:c-free} coincide. This particular case of the c-free theory is related to the objects in \cite{Krystek-Yoshida-t}, and also to Fermi independence (see Lemma~\ref{Lemma:B_t-semigroup}).
\end{enumerate}
\end{Example}

\noindent
See the references for other particular cases and generalizations of conditional freeness; the appearance of the free Meixner laws (Section~\ref{Subsec:Meixner}) in related contexts has been observed even more widely. Note also that the only other natural product, the monotone product of Muraki, can also to some degree be handled in the c-free language \cite{Franz-Multiplicative-monotone}: the monotone product of $\phi_1$ with $\phi_2$ is their conditionally free product for the second state $\delta_0 \ast \phi_2$. However, while the monotone product is associative, the triple product is apparently not a conditionally free product. See also Remark~\ref{Remark:Monotone} and Lemma~\ref{Lemma:Monotone-Phi}.

\subsection{Convolutions}
\label{Subsec:Convolutions}
If $\phi, \psi$ are two unital linear functionals on $\mf{C} \langle \mb{x} \rangle$, then $\phi \boxplus \psi$ is their free convolution, that is a unital linear functional on $\mf{C} \langle \mb{x} \rangle$ determined by
\begin{equation}
\label{Free-convolution}
\CumFun{\phi}{\mb{z}} + \CumFun{\psi}{\mb{z}} = \CumFun{\phi \boxplus \psi}{\mb{z}}.
\end{equation}
Similarly, $\phi \uplus \psi$, their Boolean convolution, is a unital linear functional on $\mf{C} \langle \mb{x} \rangle$ determined by
\[
\eta^\phi(\mb{z}) + \eta^\psi(\mb{z}) = \eta^{\phi \uplus \psi}(\mb{z}).
\]
See Lecture~12 of \cite{Nica-Speicher-book} for the relation between free convolution and free independence; the relation in the Boolean case is similar.

\br
The functionals $\rho^{\boxplus t}$ form a free convolution semigroup if $\rho^{\boxplus t} \boxplus \rho^{\boxplus s} = \rho^{\boxplus (t+s)}$. If $\rho^{\boxplus t}$ are \emph{states} for all $t \geq 0$, we say that $\rho$ is freely infinitely divisible, and will denote the corresponding semigroup by $\set{\rho_t}$. The functionals $\phi^{\uplus t}$ form a Boolean convolution semigroup if $\phi^{\uplus t} \uplus \phi^{\uplus s} = \phi^{\uplus (t+s)}$. Any state is infinitely divisible in the Boolean sense.

\subsection{Free Meixner distributions and states}
\label{Subsec:Meixner}
The semicircular distribution with mean $\alpha$ and variance $\beta$ is
\[
d\SC(\alpha, \beta)(x) = \frac{1}{2 \pi \beta} \sqrt{(4 \beta - (x - \alpha)^2} \chf{[-2 \sqrt{\beta}, 2 \sqrt{\beta}]}(x) \,dx
\]
For every $\alpha, \beta$, $\SC(\alpha t, \beta t)$ form a free convolution semigroup with respect to $t$.

\br
For $b \in \mf{R}$, $1 + c \geq 0$, the free Meixner distributions, normalized to have mean zero and variance one, are
\[
d\mu_{b,c}(x) = \frac{1}{2 \pi} \frac{\sqrt{4 (1 + c) - (x - b)^2}}{1 + b x + c x^2} \,dx + \text{ zero, one, or two atoms}.
\]
They are characterized by their Jacobi parameter sequences having the special form
\[
(0, b, b, b, \ldots), (1, 1+c, 1+c, 1+c, \ldots),
\]
or by the special form of the generating function of their orthogonal polynomials. In particular, $\mu_{0,0} = \SC(0,1)$ is the standard semicircular distribution, $\mu_{b,0}$ are the centered free Poisson distributions, and $\mu_{b,-1}$ are the normalized Bernoulli distributions. Moreover,
\[
\mu_{b,c} = \SC(b,1+c)^{\uplus 1/(1+c)} \uplus \delta_{-b}.
\]
Un-normalized free Meixner distributions, of mean $\alpha$ and variance $t$, are $\mu_{b,c}^{\boxplus t} \boxplus \delta_{\alpha}$ or $\mu_{\beta, \gamma}^{\uplus t} \uplus \delta_\alpha$, see Lemma~\ref{Lemma:Meixner}.

\br
More generally, free Meixner states are states on $\mf{C} \langle \mb{x} \rangle$, characterized by a number of equivalent conditions (see \cite{AnsFree-Meixner}), among them the equations
\begin{equation}
\label{free-quadratic-PDE}
D_i D_j \CumFun{\phi}{\mb{z}} = \delta_{ij} + \sum_{k=1}^d B_{ij}^k D_k \CumFun{\phi}{\mb{z}} + C_{ij} D_i \CumFun{\phi}{\mb{z}} D_j \CumFun{\phi}{\mb{z}}.
\end{equation}
for certain $\set{B_{ij}^k, C_{ij}}$. In \cite{AnsBoolean}, these equations were shown to be equivalent to
\begin{equation}
\label{Boolean-quadratic-PDE}
D_i D_j \eta^\phi(\mb{z}) = \delta_{ij} + \sum_{k=1}^d B_{ij}^k D_k \eta^\phi(\mb{z}) + (1 + C_{ij}) D_i \eta^\phi(\mb{z}) D_j \eta^\phi(\mb{z}).
\end{equation}

\subsection{Orthogonality of the c-free Appell polynomials}
The c-free version of the Appell polynomials were investigated in \cite{AnsAppell3}. A result which motivated the investigation of the free Meixner distributions below is the following lemma.

\begin{Lemma}[Lemma~5, Theorem~6 of \cite{AnsAppell3}]
The c-free Appell polynomials in $(\mc{A}, \phi, \psi)$ are orthogonal if and only if $\psi$ is a free product of semicircular distributions $\SC(b_i, 1 + c_i)$, and $\phi = \State{\psi}$. In this case, $\CumFun{\phi, \psi}{\mb{z}} = \sum_{i=1}^d z_i^2$, and $\phi$ is a free Meixner state.
\end{Lemma}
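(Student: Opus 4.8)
The plan is to translate orthogonality of the c-free Appell polynomials into an explicit condition on the two generating functions $\CumFun{\phi,\psi}{\mb{z}}$ and $\CumFun{\psi}{\mb{z}}$, to solve that condition, and then to read off the distributional conclusions. Recall from \cite{AnsAppell3} that the c-free Appell polynomials $\set{A_{\vec{u}}}$ are the monic polynomials determined by the c-free structure of $(\phi,\psi)$ through a generating function of \emph{Appell} (rather than the more general Sheffer) type, normalized so that $\state{A_{\vec{u}}} = 0$ for $\vec{u} \neq \emptyset$. Orthogonality means that $\set{A_{\vec{u}}}$ is a monic orthogonal system for $\phi$, i.e.\ $\state{A_{\vec{u}} A_{\vec{v}}^{*}} = 0$ whenever $\vec{u} \neq \vec{v}$. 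First I would compute $\state{A_{\vec{u}} A_{\vec{v}}^{*}}$ by expanding each moment over $\NC(n)$ with the two-state moment--cumulant formula, so that the outer blocks carry $\CumFun{\phi,\psi}{}$ and the inner blocks carry $\CumFun{\psi}{}$, and then impose vanishing of all off-diagonal entries. The claim to be extracted is that orthogonality is equivalent to $\CumFun{\phi,\psi}{\mb{z}} = \sum_{i=1}^{d} z_i^2$ together with $\CumFun{\psi}{\mb{z}}$ being affine--quadratic.

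For the forward implication, the Appell form of the generating function removes any ``raising'' change of variables, so orthogonality splits into two decoupled constraints. Isolating the contributions in which every block of the non-crossing partition is outer forces the two-state cumulants to vanish in every order except the second and to be the identity there, so that $\CumFun{\phi,\psi}{\mb{z}} = \sum_{i=1}^{d} z_i^2$. Separately, the inner-block contributions to $\state{A_{\vec{u}} A_{\vec{v}}^{*}}$ are governed by $\CumFun{\psi}{}$, and requiring that they not spoil orthogonality forces the free cumulants of $\psi$ of order $\geq 3$ to vanish, leaving $\CumFun{\psi}{\mb{z}} = \sum_{i=1}^{d}\bigl( b_i z_i + (1+c_i) z_i^2 \bigr)$; these are exactly the free cumulant functions of free products of the laws $\SC(b_i, 1 + c_i)$. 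Finally, since $\CumFun{\phi,\psi}{\mb{z}} = \sum_i z_i^2 = \CumFun{\gamma}{\mb{z}}$ for $\gamma$ the free product of standard semicirculars, the defining relation $R^{\gamma} = R^{\phi,\psi}$ gives $\phi = \State{\gamma, \psi} = \State{\psi}$.

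For the converse I would run the same computation in reverse: assuming $\CumFun{\phi,\psi}{\mb{z}} = \sum_i z_i^2$ and $\CumFun{\psi}{}$ affine--quadratic, the expansion of $\state{A_{\vec{u}} A_{\vec{v}}^{*}}$ over $\NC(n)$ collapses, because outer blocks must have exactly two elements and the remaining inner contributions telescope, leaving a Kronecker delta in $\vec{u},\vec{v}$; this is orthogonality. To see that $\phi$ is a free Meixner state, I would substitute $\CumFun{\phi,\psi}{\mb{z}} = \sum_i z_i^2$ into \eqref{eta-M-R}, obtaining $\eta^\phi$ as an explicit expression in $M^\psi$ (in one variable $\eta^\phi(w) = (1 + M^\psi(w))\, w^2$), and then verify the Boolean quadratic equation \eqref{Boolean-quadratic-PDE} directly, using that $M^\psi$ itself solves \eqref{R-M-w} for the affine--quadratic $\CumFun{\psi}{}$. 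In one variable this amounts to the observation that $\State{\cdot}$ shifts the constant Jacobi sequences $(b,b,\ldots),(1+c,1+c,\ldots)$ of $\SC(b,1+c)$ to $(0,b,b,\ldots),(1,1+c,1+c,\ldots)$, which are precisely the free Meixner parameters of $\mu_{b,c}$.

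The main obstacle is the forward direction: decoupling the single orthogonality relation into the two separate constraints, one on $\CumFun{\phi,\psi}{}$ and one on $\CumFun{\psi}{}$. Since inner and outer blocks interact in the two-state moment--cumulant formula, and since in the non-commutative multivariate setting orthogonality must be read as a statement about the full bilinear form $\state{A_{\vec{u}} A_{\vec{v}}^{*}}$ rather than a scalar three-term recursion, care is needed to rule out any cancellation between higher-order $\CumFun{\phi,\psi}{}$ and higher-order $\CumFun{\psi}{}$ terms that might accidentally preserve orthogonality. Organizing the expansion by the outermost block, in the spirit of the $\NC'(V)$ notation, and inducting on the number of blocks is the cleanest way I see to push this through.
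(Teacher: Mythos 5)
First, note that this Lemma is not proved in the paper at all: it is quoted verbatim from Lemma~5 and Theorem~6 of \cite{AnsAppell3}, so there is no in-paper argument to compare yours against. Judged on its own terms, your reduction of the statement to the equivalence ``orthogonality $\Leftrightarrow$ $\CumFun{\phi,\psi}{\mb{z}} = \sum_{i=1}^d z_i^2$ and $\CumFun{\psi}{\mb{z}}$ affine--quadratic'' identifies the correct target: by \eqref{eta-M-R} the first condition is exactly $\phi = \State{\psi}$, and the second (with no cross terms $z_i z_j$, $i \neq j$) is exactly ``$\psi$ is a free product of the $\SC(b_i, 1+c_i)$''. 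But the proof has a genuine gap precisely where you flag it. In the forward direction you assert that the all-outer-block contributions force $\Cum{\phi,\psi}{\cdot}$ to truncate at order two and that the inner-block contributions separately force $\Cum{\psi}{\cdot}$ to truncate, but in the expansion of $\state{A_{\vec{u}} A_{\vec{v}}^{*}}$ these contributions are summed together over all of $\NC(n)$, and nothing in your sketch rules out a nonzero higher-order $\Cum{\phi,\psi}{\cdot}$ being compensated by higher-order $\Cum{\psi}{\cdot}$ terms so that orthogonality survives. ``Organizing by the outermost block and inducting'' is a plan, not an argument; as written neither implication is established. You also need to exclude off-diagonal covariance for $\psi$ (a semicircular family with nondiagonal covariance is affine--quadratic but is not a free product of one-variable semicirculars), which your sketch silently assumes away by writing $\sum_i (b_i z_i + (1+c_i) z_i^2)$.

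The cleaner route, and the one taken in the cited reference, avoids expanding the bilinear form altogether: the c-free Appell polynomials satisfy an explicit recursion in which $x_i A_{\vec{u}}$ is expressed with coefficients that are literally the cumulants $\Cum{\phi,\psi}{\cdot}$ (for the term adjacent to the newly multiplied variable) and $\Cum{\psi}{\cdot}$ (for the remaining terms). Orthogonality of a monic family is equivalent to a three-term recursion (in several variables, to the MOPS conditions of \cite{AnsMulti-Sheffer}), so it immediately forces every recursion coefficient of order at least three, and every mixed second-order coefficient, to vanish; the decoupling you are struggling with is then automatic, and the converse is the easy direction of Favard's theorem. I would restructure the proof around that recursion rather than around $\state{A_{\vec{u}} A_{\vec{v}}^{*}}$, or simply cite \cite{AnsAppell3} as the paper does.
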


\noindent
Here $\Phi$ is a map defined by Belinschi and Nica in \cite{Belinschi-Nica-Free-BM} via
\[
\eta^{\State{\psi}}(\mb{w}) = \sum_{i=1}^d w_i (1 + M^\psi(\mb{w})) w_i,
\]
see \cite{AnsAppell3} for other descriptions of it.

\section{Free Meixner distributions and conditional freeness}
\label{Section:Meixner}

\begin{Lemma}
\label{Lemma:boxplus-uplus}
For two functionals $\phi, \psi$, $\CumFun{\phi, \psi}{\mb{z}} = \sum_{i=1}^d a_i z_i + \beta \CumFun{\psi}{\mb{z}}$ if and only if
\[
\phi = \delta_{\mb{a}} \uplus \psi^{\uplus \beta}.
\]
Equivalently, if $\rho$ is a (not necessarily positive) functional with $\CumFun{\rho}{\mb{z}} = \CumFun{\phi, \psi}{\mb{z}}$ and $\CumFun{\psi}{\mb{z}} = \sum_{i=1}^d a_i z_i + t \CumFun{\rho}{\mb{z}}$, then
\[
\psi = \delta_{\mb{a}} \boxplus \rho^{\boxplus t} = \delta_{\mb{a}} \uplus \phi^{\uplus t}.
\]
\end{Lemma}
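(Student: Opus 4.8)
The plan is to convert both equivalences into identities between generating functions, letting the defining relation \eqref{eta-M-R} carry the load. The first step is to record the cumulants of the point mass $\delta_{\mb{a}}$. Since $\delta_{\mb{a}}$ is multiplicative, with moments $\delta_{\mb{a}}[x_{\vec u}] = a_{u(1)} \cdots a_{u(n)}$, expanding $1 + M^{\delta_{\mb{a}}} = (1 - \eta^{\delta_{\mb{a}}})^{-1}$ forces $\eta^{\delta_{\mb{a}}}(\mb{z}) = \sum_{i=1}^d a_i z_i$; checking the same linear series against \eqref{R-M-w} shows it is also $\CumFun{\delta_{\mb{a}}}{\mb{z}}$. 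Consequently, by the very definitions of the two convolutions, ``$\phi = \delta_{\mb{a}} \uplus \psi^{\uplus \beta}$'' means precisely $\eta^\phi(\mb{z}) = \sum_{i=1}^d a_i z_i + \beta\, \eta^\psi(\mb{z})$, and ``$\psi = \delta_{\mb{a}} \boxplus \rho^{\boxplus t}$'' means precisely $\CumFun{\psi}{\mb{z}} = \sum_{i=1}^d a_i z_i + t\, \CumFun{\rho}{\mb{z}}$.

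For the first equivalence I would apply the change of variables \eqref{z-M-w}, $z_i = (1 + M^\psi(\mb{w})) w_i$, to the assumed form of $\CumFun{\phi,\psi}{\mb{z}}$. The free term becomes $\CumFun{\psi}{\mb{z}} = M^\psi(\mb{w})$ by \eqref{R-M-w}, while the linear term factors to the left, the scalars $a_i$ commuting past everything:
\[
\sum_{i=1}^d a_i z_i = (1 + M^\psi(\mb{w})) \sum_{i=1}^d a_i w_i.
\]
Substituting into \eqref{eta-M-R} and using $(1 + M^\psi(\mb{w}))^{-1} M^\psi(\mb{w}) = \eta^\psi(\mb{w})$ yields
\[
\eta^\phi(\mb{w}) = \sum_{i=1}^d a_i w_i + \beta\, \eta^\psi(\mb{w}),
\]
which by the first paragraph is $\phi = \delta_{\mb{a}} \uplus \psi^{\uplus \beta}$. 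As \eqref{eta-M-R} and \eqref{z-M-w} are invertible, each manipulation is reversible, so the implication runs both ways.

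The ``equivalently'' clause is then pure bookkeeping. The identity $\psi = \delta_{\mb{a}} \boxplus \rho^{\boxplus t}$ is immediate from the hypothesis $\CumFun{\psi}{\mb{z}} = \sum_i a_i z_i + t\, \CumFun{\rho}{\mb{z}}$ and the additivity of free cumulants, once we note $\CumFun{\delta_{\mb{a}}}{\mb{z}} = \sum_i a_i z_i$ and $\CumFun{\rho^{\boxplus t}}{\mb{z}} = t\, \CumFun{\rho}{\mb{z}}$. For the Boolean identity I would combine $\CumFun{\rho}{\mb{z}} = \CumFun{\phi,\psi}{\mb{z}}$ with the hypothesis and solve for the two-state cumulants,
\[
\CumFun{\phi,\psi}{\mb{z}} = -\tfrac{1}{t} \sum_{i=1}^d a_i z_i + \tfrac{1}{t}\, \CumFun{\psi}{\mb{z}},
\]
then invoke the first equivalence with $\mb{b} = -\mb{a}/t$ and $\beta = 1/t$ to get $\phi = \delta_{-\mb{a}/t} \uplus \psi^{\uplus 1/t}$. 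Passing to Boolean cumulants gives $\eta^\phi = -\tfrac{1}{t}\sum_i a_i z_i + \tfrac{1}{t}\, \eta^\psi$, and solving for $\eta^\psi$ produces $\eta^\psi = \sum_i a_i z_i + t\, \eta^\phi = \eta^{\delta_{\mb{a}} \uplus \phi^{\uplus t}}$, that is, $\psi = \delta_{\mb{a}} \uplus \phi^{\uplus t}$.

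The one genuine hazard is non-commutative bookkeeping inside the change of variables: the series $1 + M^\psi(\mb{w})$ must be kept on the left of each $w_i$, and the reduction of $(1 + M^\psi(\mb{w}))^{-1} M^\psi(\mb{w})$ to $\eta^\psi(\mb{w})$ must be carried out by writing $M^\psi = (1 + M^\psi) - 1$ rather than assuming any commutation. Once the placement of these series is tracked carefully, every remaining step is a routine manipulation of formal power series, and the reversibility needed for the ``if and only if'' is automatic.
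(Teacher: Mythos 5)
Your proof is correct and follows essentially the same route as the paper: the first equivalence is exactly the paper's computation, substituting the assumed form of $\CumFun{\phi,\psi}{\mb{z}}$ into the defining relation \eqref{eta-M-R} and identifying $\eta^{\delta_{\mb{a}}}(\mb{z})=\CumFun{\delta_{\mb{a}}}{\mb{z}}=\sum_i a_i z_i$. The only divergence is that for the second statement the paper repeats an analogous direct computation (``the proof \ldots is similar''), whereas you derive it from the first equivalence by dividing by $t$ --- which tacitly assumes $t\neq 0$, though the excluded case $t=0$ gives $\psi=\delta_{\mb{a}}$ and is trivially consistent with both asserted identities.
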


\begin{proof}
If $\CumFun{\phi, \psi}{\mb{z}} = \sum_{i=1}^d a_i z_i + \beta \CumFun{\psi}{\mb{z}}$, then
\[
\begin{split}
\eta^\phi(\mb{w})
& = (1 + M^\psi(\mb{w}))^{-1} \CumFun{\phi, \psi}{(1 + M^\psi(\mb{w})) \mb{w}} \\
& = \sum_{i=1}^d a_i w_i + \beta (1 + M^\psi(\mb{w}))^{-1} \CumFun{\psi}{(1 + M^\psi(\mb{w})) \mb{w}} \\
& = \sum_{i=1}^d a_i w_i + \beta (1 + M^\psi(\mb{w}))^{-1} M^\psi(\mb{w})
= \sum_{i=1}^d a_i w_i + \beta \eta^\psi(\mb{w})
= \eta^{\delta_{\mb{a}}} + \eta^{\psi^{\uplus \beta}},
\end{split}
\]
so
\[
\phi = \delta_{\mb{a}} \uplus \psi^{\uplus \beta}.
\]
Reversing the argument gives the reverse implication. The proof of the second statement is similar.
\end{proof}

\begin{Defn-Remark}
For $\rho, \psi$ linear functionals, define the map
\[
(\rho, \psi) \mapsto \State{\rho, \psi}
\]
via
\[
\eta^{\State{\rho, \psi}}(\mb{w}) = (1 + M^\psi(\mb{w}))^{-1} \CumFun{\rho}{(1 + M^\psi(\mb{w})) \mb{w}}.
\]
In other words, for
\[
\phi = \State{\rho, \psi}
\]
we have
\[
\CumFun{\rho}{\mb{z}} = \CumFun{\phi, \psi}{\mb{z}}.
\]
Note that $\CumFun{\SC(0,1)}{\mb{z}} = \sum_{i=1}^d z_i^2$, so that
\[
\eta^{\State{\SC(0,1), \psi}}(\mb{w}) = (1 + M^\psi(\mb{w}))^{-1} \sum_{i=1}^d (1 + M^\psi(\mb{w})) w_i (1 + M^\psi(\mb{w})) w_i
\]
and
\[
\State{\SC(0,1), \psi} = \State{\psi}.
\]
On the other hand, from $R^{\phi, \phi} = R^\phi$ and $R^{\phi, \delta_0} = \eta^\phi$ it follows that for any $\rho$, $\State{\rho, \rho} = \rho$ and $\mf{B}[\State{\rho, \delta_0}] = \rho$, where the map $\mf{B}$ is defined in the next remark. See also Remark~\ref{Remark:Lenczewski}.

\br
We will discuss in the next section under what conditions $\State{\rho, \psi}$ is a state.
\end{Defn-Remark}

\begin{Defn-Remark}
For $\mb{a} \in \mf{R}^d$, define the transformation $\mf{B}_{\mb{a}, t}$ by
\[
\mf{B}_{\mb{a}, t}[\rho] = \Bigl((\rho^{\boxplus (1 + t)} \boxplus \delta_{\mb{a}}) \uplus \delta_{-\mb{a}} \Bigr)^{\uplus 1/(1+t)}.
\]
$\mf{B}_{\mb{a}, t}$ maps functionals to functionals; for $t \geq 0$, it maps states to states. For $t > -1$, $\mf{B}_{\mb{a}, t}$ maps freely infinitely divisible states to states; for smaller $t$, the domains of $\mf{B}_t$ as a map from states to states get progressively smaller. For $\mb{a} = 0$,
\[
\mf{B}_{0, t}[\rho] = \mf{B}_t[\rho] = (\rho^{\boxplus (1+t)})^{\uplus (1/(1+t))}
\]
is the Belinschi-Nica transformation \cite{Belinschi-Nica-B_t,Belinschi-Nica-Free-BM}. In particular, $\mf{B}_{0,1} = \mf{B}$, the Boolean-to-free version of the Bercovici-Pata bijection. On the other hand, we will show below that for each $\rho$,
\[
\mf{B}_{(\rho[x_1], \ldots, \rho[x_d]), 0}[\rho]
\]
is the image of $\rho$ under the Boolean-to-Fermi version of the Bercovici-Pata bijection, in the sense of \cite{Oravecz-Fermi}.

\br
The preceding lemma states that if $\CumFun{\rho}{\mb{z}} = \CumFun{\phi, \psi}{\mb{z}}$ and $\CumFun{\psi}{\mb{z}} = \sum_{i=1}^d a_i z_i + (1 + t) \CumFun{\rho}{\mb{z}}$, then $\phi = \mf{B}_{\mb{a}, t}[\rho]$. In other words,
\begin{equation}
\label{Phi-rho-1+t}
\phi = \mf{B}_{\mb{a}, t}[\rho] = \State{\rho, \rho^{\boxplus (1+t)} \boxplus \delta_{\mb{a}}} = \State{\rho, \rho \boxplus \rho^{\boxplus t} \boxplus \delta_{\mb{a}}}.
\end{equation}
We will generalize this result in the next section. Note also that in this case, $\psi = U_{\mb{t}}(\phi)$ in the sense of \cite{Krystek-Yoshida-t}.
\end{Defn-Remark}

\noindent
The following lemma extends Proposition~8 from \cite{AnsBoolean} and Example~4.5 and Remark~4.6 from \cite{Belinschi-Nica-B_t}.

\begin{Lemma}
\label{Lemma:Meixner}
\br
\begin{enumerate}
\item
For $\mu_{b,c}$ a free Meixner distribution,
\[
\mf{B}_{\alpha, t}[\mu_{b,c}] = \mu_{b + \alpha, c + t}
\]
is also a free Meixner distribution.
\item
More generally, for $\phi_{\set{T_i}, C}$ a free Meixner state,
\[
\mf{B}_{\mb{a}, t}[\phi_{\set{T_i}, C}] = \phi_{\set{a_i I + T_i}, tI + C}
\]
is also a free Meixner state.
\item
Any free Meixner distribution can be obtained from the Bernoulli distribution
\[
\mu_{0,-1} = \frac{1}{2} \delta_{-1} + \frac{1}{2} \delta_1
\]
by the application of the appropriate $\mf{B}_{\alpha, t}$.
\item
For $\rho = \mu_{b,c}$ a free Meixner distribution,
\[
\State{\mu_{b,c}, \mu_{b,c}^{\boxplus (1+t)} \boxplus \delta_\alpha} = \mf{B}_{\alpha, t}[\mu_{b,c}] = \mu_{b + \alpha, c + t}.
\]
A partial converse to this statement is given in Proposition~\ref{Prop:Meixner-characterization}.
\end{enumerate}
\end{Lemma}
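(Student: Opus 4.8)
The plan is to prove (a) by a direct generating-function computation and then obtain (b), (c), (d) as consequences. First observe that the first equality in (d) is immediate from \eqref{Phi-rho-1+t} with $\rho = \mu_{b,c}$ and $\mb{a} = \alpha$, so the entire lemma reduces to the single identification
\[
\mf{B}_{\alpha,t}[\mu_{b,c}] = \State{\mu_{b,c},\, \mu_{b,c}^{\boxplus(1+t)}\boxplus\delta_\alpha} = \mu_{b+\alpha,\,c+t}.
\]

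To establish this I would first record the quadratic equations coming from the two quadratic characterizations of free Meixner laws. Writing $R := \CumFun{\mu_{b,c}}{z}$, the one-variable form of \eqref{free-quadratic-PDE} with $B^1_{11}=b$ and $C_{11}=c$, together with $DR = R/z$ and $D^2 R = R/z^2$ (valid since $R$ has zero constant and linear terms), gives $c R^2 + (bz-1)R + z^2 = 0$. Likewise, the normalized law $\mu_{b',c'}$ is characterized through the one-variable form of \eqref{Boolean-quadratic-PDE} by
\[
(1+c')\,(\eta^{\mu_{b',c'}})^2 + (b'w-1)\,\eta^{\mu_{b',c'}} + w^2 = 0.
\]

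Next I would compute $\eta^\phi$ for $\phi = \State{\rho,\psi}$ with $\rho = \mu_{b,c}$ and $\psi = \mu_{b,c}^{\boxplus(1+t)}\boxplus\delta_\alpha$, so that $\CumFun{\psi}{z} = (1+t)R + \alpha z$. Using the definition of $\Phi$ together with the change of variables \eqref{z-M-w}, for which $1+M^\psi(w) = 1+\CumFun{\psi}{z}$, one obtains
\[
\eta^\phi(w) = \frac{R}{\,1+\CumFun{\psi}{z}\,}, \qquad w = \frac{z}{\,1+\CumFun{\psi}{z}\,}.
\]
Setting $Q = 1 + \CumFun{\psi}{z} = 1 + (1+t)R + \alpha z$, this reads $\eta^\phi = R/Q$ and $w = z/Q = z + O(z^2)$, an invertible change of variables. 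The heart of the argument is then to substitute $\eta = R/Q$, $w = z/Q$, $b' = b+\alpha$, $c' = c+t$ into the target equation and clear $Q^2$: the cross term contributes $(b+\alpha)zR - RQ$, and since $RQ = R + (1+t)R^2 + \alpha z R$, the two $\alpha z R$ terms cancel while $(1+t)R^2$ absorbs part of $(1+c+t)R^2$. What survives is exactly $cR^2 + (bz-1)R + z^2 = 0$, which holds by the first step. As each quadratic determines its solution uniquely among power series vanishing at $0$, this identifies $\phi = \mu_{b+\alpha,c+t}$ and proves (a), hence (d).

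Parts (c) and (b) then follow. For (c) I would apply (a) with $\alpha = b$ and $t = c+1 \ge 0$ (admissible since $1+c\ge 0$) to get $\mf{B}_{b,c+1}[\mu_{0,-1}] = \mu_{b,c}$. For (b) I would run the identical argument in $d$ variables, now using the non-commutative PDEs \eqref{free-quadratic-PDE} and \eqref{Boolean-quadratic-PDE}: expressing $\eta^\phi$ through the same change of variables \eqref{z-M-w} from $\CumFun{\rho}{\mb{z}} = \CumFun{\phi,\psi}{\mb{z}}$ and substituting into the Boolean PDE for $\phi_{\set{a_i I + T_i},\, tI + C}$ should, after the same cancellations, reduce it to the free PDE for $\rho = \phi_{\set{T_i}, C}$. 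The main obstacle is precisely this multivariate step: one must apply $D_iD_j$ to $\eta^\phi = (1+\CumFun{\psi}{\mb{z}})^{-1}\CumFun{\rho}{\mb{z}}$ under the non-commuting substitution $w_i = (1+\CumFun{\psi}{\mb{z}})^{-1}z_i$ and verify that the translation by $\delta_{\mb{a}}$ shifts the $B$-type data to $\set{a_i I + T_i}$ while the free power $1+t$ shifts $C$ to $tI+C$. Keeping track of the orderings in this non-commutative chain rule is where the real work lies; the scalar computation in (a) is the transparent model for the pattern.
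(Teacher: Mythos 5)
Your argument is correct, but it takes a genuinely different route from the paper's for the core computation. The paper proves part (a) by reducing $\mf{B}_{\alpha,t}[\mu_{b,c}] = \mu_{b+\alpha,c+t}$ to the identity $\delta_{\alpha} \boxplus \mu_{b,c}^{\boxplus(1+t)} = \delta_{\alpha} \uplus \mu_{b+\alpha,c+t}^{\uplus(1+t)}$ and verifying it by comparing Jacobi parameter sequences, using the known effect of $\boxplus$-powers and shifts on Jacobi parameters of free Meixner laws (Section~3.1 of \cite{AnsFree-Meixner}) and of $\delta_\alpha \uplus \mu^{\uplus t}$ on general Jacobi parameters (\cite{Boz-Wys}, Appendix of \cite{AnsBoolean}); parts (c) and (d) are then read off exactly as you do, so your treatment of those is the same as the paper's. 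Your replacement for (a) --- deriving the quadratics $cR^2+(bz-1)R+z^2=0$ and $(1+c')\eta^2+(b'w-1)\eta+w^2=0$ from \eqref{free-quadratic-PDE} and \eqref{Boolean-quadratic-PDE}, substituting $\eta = R/Q$, $w=z/Q$ with $Q = 1+(1+t)R+\alpha z$, and checking the cancellation --- is a valid computation (I verified the algebra: after clearing $Q^2$ one is left with exactly $cR^2+(bz-1)R+z^2$), and the uniqueness of the power-series root vanishing at $0$ does pin down $\phi = \mu_{b+\alpha,c+t}$. What your route buys is self-containedness at the level of generating functions, avoiding the external Jacobi-parameter facts; what the paper's route buys is that part (b) really is ``similar,'' since the multivariate Jacobi-type parameters of \cite{AnsMulti-Sheffer} transform in the same way, whereas your plan for (b) requires the non-commutative chain-rule computation that you explicitly defer --- that step is essentially the Proposition proved immediately after this Lemma in the paper, so it is doable, but as written your (b) is a sketch rather than a proof (as, to be fair, is the paper's one-line ``the proof for part (b) is similar'').
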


\begin{proof}
For part (a), it suffices to show that
\[
\delta_{\beta - b} \boxplus \mu_{b, c}^{\boxplus (1 + \gamma - c)} = \delta_{\beta - b} \uplus \mu_{\beta, \gamma}^{\uplus (1 + \gamma - c)}.
\]
$\mu_{b,c}$ has Jacobi parameter sequences
\[
\set{(0, b, b, b, \ldots,), (1, 1+c, 1+c, 1+c, \ldots)},
\]
As a particular case of the results in Section 3.1 of \cite{AnsFree-Meixner}, it follows that $\delta_{\beta - b} \boxplus \mu_{b, c}^{\boxplus (1 + \gamma - c)}$ has Jacobi parameter sequences
\[
\set{(\beta - b, \beta, \beta, \beta, \ldots,), (1 + \gamma - c, 1 + \gamma, 1 + \gamma, 1 + \gamma, \ldots)}.
\]
On the other hand, as observed in \cite{Boz-Wys} or the Appendix of \cite{AnsBoolean}, for any measure $\mu$ with Jacobi parameters
\[
\set{(\beta_0, \beta_1, \beta_2, \ldots), (\gamma_1, \gamma_2, \gamma_3, \ldots)},
\]
the measure $\delta_\alpha \uplus \mu^{\uplus t}$ has Jacobi parameters
\[
\set{(\alpha + t \beta_0, \beta_1, \beta_2, \ldots), (t \gamma_1, \gamma_2, \gamma_3, \ldots)}.
\]
The result follows. The proof for part (b) is similar. For part (c),
\[
\mu_{b,c} = \mf{B}_{b,1+c}[\mu_{0,-1}].
\]
Part (d) follows by combining part (a) with equation~\eqref{Phi-rho-1+t}.
\end{proof}

\begin{Example}
In the c-free central limit theorem (Theorem~4.3 of \cite{BLS96}), for the limiting distribution $\CumFun{\rho}{z} = \CumFun{\phi, \psi}{z} = z^2$, so that $\rho = \mu_{0,0}$ is the semicircular distribution and
\[
\phi = \State{\psi}
\]
If moreover $\CumFun{\psi}{z} = t z^2$, that is $\psi = \rho^{\boxplus t}$, then
\[
\phi = \State{\mu_{0,0}^{\boxplus t}} = \mu_{0, -1 + t}
\]
so that $\phi$ can be any symmetric free Meixner distribution.

\br
In the c-free Poisson limit theorem (centered version of Theorem~4.4 of \cite{BLS96}), for the limiting distribution $\CumFun{\rho}{z} = \CumFun{\phi, \psi}{z} = \frac{z^2}{1 - b z}$, so $\rho = \mu_{b,0}$ is the centered free Poisson distribution. Using Lemma~\ref{Lemma:Monotone-Phi}, it follows that
\[
\phi = \State{\mu_{b,0}, \psi} = \State{\delta_b \rhd \psi} = \State{\delta_b \uplus \psi}.
\]
If moreover $\CumFun{\psi}{z} = t \frac{z^2}{1 - a z}$ for $a = b$, then $\psi = \rho^{\boxplus t}$ and
\[
\phi = \State{\rho, \psi} = \State{\mu_{b,0}, \mu_{b,0}^{\boxplus t}} = \mu_{b, -1 + t},
\]
so that $\phi$ can be any free Meixner distribution. If $a \neq b$, then
\[
\phi = \State{\delta_b \uplus \mu_{a,0}^{\boxplus t}} = \State{\delta_b \uplus \mf{B}_{t-1}[\mu_{a,0}]^{\uplus t}} = \State{\delta_b \uplus \mu_{a,-1+t}^{\uplus t}} = \State{\delta_{b-a} \uplus \SC(a,t)},
\]
which is a distribution whose Jacobi parameter sequences are constant after step three, cf.\ Theorems 11 and 12 of \cite{Kry-Woj-Associative}.

\br
Similar arguments explain the appearance of the free Meixner distributions in various contexts which can be derived from the c-free formalism.

\br
Finally, if $\phi = \mu_{\beta, \gamma}$ and $\rho = \mu_{b, c}$ are both free Meixner distributions, as long as
\[
(c \leq 0, \gamma \geq c) \text{ or } 1 + \gamma \geq c > 0,
\]
$\CumFun{\phi, \psi}{z} = \CumFun{\rho}{z}$ for $\psi = \mu_{b, c}^{\boxplus (1 + \gamma - c)} \boxplus \delta_{\beta - b}$. Moreover, for arbitrary free Meixner distributions, $\mu_{\beta, \gamma} = \mf{B}_{\beta - b, \gamma - c}[\mu_{b, c}]$. In this case $1 + \gamma - c$ may be negative, however $\mu_{b, c}$ is in the range of $\mf{B}_{\alpha, 1 + c}$ (its $\boxplus$-divisibility indicator in the sense of equation~1.7 of \cite{Belinschi-Nica-B_t} is $1+c$), and so in the domain of $\mf{B}_{\alpha, t}$ for $t \geq - (1 + c)$.

\br
Similar results hold in the multivariate case. See also Remark~\ref{Remark:Lenczewski}.
\end{Example}

\begin{Prop}
If $\phi$ is a free Meixner state, then
\[
\begin{split}
D_i D_j \CumFun{\phi, \psi}{\mb{z}}
& = \delta_{ij} + \sum_k B_{ij}^k D_k \CumFun{\phi, \psi}{\mb{z}} + (1 + C_{ij}) D_i \CumFun{\phi, \psi}{\mb{z}} D_j \CumFun{\phi, \psi}{\mb{z}} \\
&\quad - D_i \CumFun{\psi}{\mb{z}} D_j \CumFun{\phi, \psi}{\mb{z}}.
\end{split}
\]
Note that this reduces to equation~\eqref{free-quadratic-PDE} for $\psi = \phi$, and to equation~\eqref{Boolean-quadratic-PDE} for $\psi = \delta_0$.
\end{Prop}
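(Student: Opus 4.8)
The plan is to transport the Boolean quadratic equation~\eqref{Boolean-quadratic-PDE}, which holds because $\phi$ is a free Meixner state, through the substitution $z_i = (1 + M^\psi(\mb{w})) w_i$ of~\eqref{z-M-w} together with the defining relation~\eqref{eta-M-R}, thereby converting an identity for $\eta^\phi$ into the asserted identity for $\CumFun{\phi, \psi}{\mb{z}}$. Throughout set $H = 1 + M^\psi(\mb{w})$, which by~\eqref{R-M-w} coincides with $1 + \CumFun{\psi}{\mb{z}}$; note that $H(0) = 1$ and that both $\CumFun{\psi}{\cdot}$ and $\CumFun{\phi,\psi}{\cdot}$ vanish at the origin.

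The crux is a transformation rule for the left derivative under this substitution. Writing $D_i^{\mb{w}}$ and $D_i^{\mb{z}}$ for the left non-commutative derivatives in the $\mb{w}$- and $\mb{z}$-variables, I would first record the expansion $G = G(0) + \sum_i w_i\, D_i^{\mb{w}} G$ and the twisted Leibniz rule $D_i^{\mb{w}}(AB) = (D_i^{\mb{w}} A) B + A(0)\, D_i^{\mb{w}} B$. Substituting $z_k = H w_k$ into the expansion and differentiating term by term gives, for any series $G(\mb{z})$,
\[
D_i^{\mb{w}} G = D_i^{\mb{z}} G + (D_i^{\mb{w}} H)\, H^{-1} (G - G(0)).
\]
Applying this with $G = \CumFun{\psi}{\mb{z}}$ and using $H - 1 = \CumFun{\psi}{\mb{z}}$ yields the simplification $(D_i^{\mb{w}} H)\, H^{-1} = D_i^{\mb{z}} \CumFun{\psi}{\mb{z}}$, so that the rule becomes
\[
D_i^{\mb{w}} G = D_i^{\mb{z}} G + D_i^{\mb{z}} \CumFun{\psi}{\mb{z}} \cdot (G - G(0)).
\]

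With this rule in hand the rest is mechanical. Applying it to $G = \eta^\phi = H^{-1} \CumFun{\phi,\psi}{\mb{z}}$ and expanding $D_i^{\mb{z}}(H^{-1}\CumFun{\phi,\psi}{\mb{z}})$ via $D_i^{\mb{z}} H^{-1} = -(D_i^{\mb{z}} \CumFun{\psi}{\mb{z}}) H^{-1}$, the correction terms cancel and leave the clean first-order relation $D_i^{\mb{w}} \eta^\phi = D_i^{\mb{z}} \CumFun{\phi,\psi}{\mb{z}}$. Differentiating once more, now applying the rule to $G = D_j^{\mb{z}}\CumFun{\phi,\psi}{\mb{z}}$, whose constant term is the first cumulant $\state{x_j} = 0$ by the standing zero-mean normalization of $\phi$, produces
\[
D_i^{\mb{w}} D_j^{\mb{w}} \eta^\phi = D_i^{\mb{z}} D_j^{\mb{z}} \CumFun{\phi,\psi}{\mb{z}} + D_i^{\mb{z}} \CumFun{\psi}{\mb{z}} \cdot D_j^{\mb{z}} \CumFun{\phi,\psi}{\mb{z}}.
\]
Inserting these two expressions into~\eqref{Boolean-quadratic-PDE} and moving the extra term to the right-hand side gives exactly the claimed equation.

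The main obstacle is the second paragraph: establishing the transformation rule correctly, since $D_i$ is a twisted rather than an ordinary derivation, so the non-commutative Leibniz bookkeeping must be done carefully, and it is precisely the identity $(D_i^{\mb{w}} H) H^{-1} = D_i^{\mb{z}} \CumFun{\psi}{\mb{z}}$ that makes everything collapse. Once the rule is in place the remaining steps are essentially forced. As a consistency check, the result degenerates correctly in the two regimes already noted after the statement: for $\psi = \phi$ the extra term merges with the quadratic term to reinstate the coefficient $C_{ij}$ of~\eqref{free-quadratic-PDE}, while for $\psi = \delta_0$ the extra term drops and $\CumFun{\phi,\delta_0}{\mb{z}} = \eta^\phi(\mb{z})$ returns~\eqref{Boolean-quadratic-PDE}.
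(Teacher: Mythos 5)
Your proposal is correct and follows essentially the same route as the paper: both transport equation~\eqref{Boolean-quadratic-PDE} through the substitution $z_i = (1+M^\psi(\mb{w}))w_i$ using the two identities $D_j\eta^\phi(\mb{w}) = (D_j R^{\phi,\psi})(\mb{z})$ and $D_iD_j\eta^\phi(\mb{w}) = (D_iD_jR^{\phi,\psi})(\mb{z}) + (D_iR^\psi)(\mb{z})(D_jR^{\phi,\psi})(\mb{z})$, the latter via $(D_i^{\mb{w}}H)H^{-1} = (D_iR^\psi)(\mb{z}) = D_i\eta^\psi(\mb{w})$, which is the paper's equation~\eqref{D-eta-D-R}. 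Your only deviation is packaging the computation as a general change-of-variables rule for the left derivative (correctly noting that $R^{\phi,\psi}[x_j]=\phi[x_j]=0$ is needed), where the paper performs the same expansion directly.
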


\begin{proof}
From relation~\eqref{eta-M-R}, it follows that
\[
D_j \eta^{\phi}(\mb{w})
= (D_j R^{\phi, \psi}) \left((1 + M^\psi(\mb{w})) \mb{w}\right)
\]
and
\[
\begin{split}
(D_i D_j R^{\phi, \psi}) \left((1 + M^\psi(\mb{w})) \mb{w}\right)
& = D_i \left[ (1 + M^\psi(\mb{w}))^{-1} (D_j R^{\phi, \psi}) \left((1 + M^\psi(\mb{w})) \mb{w}\right) \right] \\
& = D_i \left[ (1 + M^\psi(\mb{w}))^{-1} D_j \eta^\phi(\mb{w}) \right] \\
& = D_i (1 + M^\psi(\mb{w}))^{-1} D_j \eta^\phi(\mb{w}) + D_i D_j \eta^\phi(\mb{w}) \\
& = - D_i \eta^\psi(\mb{w}) D_j \eta^\phi(\mb{w}) + D_i D_j \eta^\phi(\mb{w}).
\end{split}
\]
From equation~\eqref{Boolean-quadratic-PDE} for $\eta^\phi$ we get
\[
\begin{split}
D_i D_j \CumFun{\phi, \psi}{\mb{z}}
& = \delta_{ij} + \sum_k B_{ij}^k D_k \CumFun{\phi, \psi}{\mb{z}} + (1 + C_{ij}) D_i \CumFun{\phi, \psi}{\mb{z}} D_j \CumFun{\phi, \psi}{\mb{z}} \\
& \quad - D_i \eta^{\psi}(\mb{w}) D_j \CumFun{\phi, \psi}{\mb{z}}
\end{split}
\]
Since
\begin{equation}
\label{D-eta-D-R}
\begin{split}
D_i \eta^\psi(\mb{w})
& = D_i \left[(1 + M^\psi(\mb{w}))^{-1} M^\psi(\mb{w}) \right] \\
& = D_i \left[(1 + M^\psi(\mb{w}))^{-1} \CumFun{\psi}{(1 + M^\psi(\mb{w})) \mb{w}} \right]
= (D_i R^\psi)  \left( (1 + M^\psi(\mb{w})) \mb{w} \right)
\end{split}
\end{equation}
we finally get
\[
\begin{split}
D_i D_j \CumFun{\phi, \psi}{\mb{z}}
& = \delta_{ij} + \sum_k B_{ij}^k D_k \CumFun{\phi, \psi}{\mb{z}} + (1 + C_{ij}) D_i \CumFun{\phi, \psi}{\mb{z}} D_j \CumFun{\phi, \psi}{\mb{z}} \\
&\quad - D_i \CumFun{\psi}{\mb{z}} D_j \CumFun{\phi, \psi}{\mb{z}}. \qedhere
\end{split}
\]
\end{proof}

\begin{Remark}[Laha-Lukacs relations]
In \cite{Laha-Lukacs}, Laha and Lukacs proved that the Meixner distributions are characterized by a certain property involving linear conditional expectations and quadratic conditional variances. In \cite{Boz-Bryc}, Bo{\.z}ejko and Bryc proved that an identical characterization holds, in the free setting, for the free Meixner distributions. In \cite{Boz-Bryc-Two-states}, they further characterized all distributions having the Laha-Lukacs property in the two-state setting. In our notation, and with some extra assumptions, their result states that in this case,
\[
\CumFun{\phi, \psi}{z} = \frac{z^2}{1 - b z - c \CumFun{\psi}{z}},
\]
or equivalently
\[
D^2 \CumFun{\phi, \psi}{z} = 1 + b D \CumFun{\phi, \psi}{z} + c D \CumFun{\psi}{z} D \CumFun{\phi, \psi}{z}.
\]
Thus for $b=c=0$, one gets $\CumFun{\phi, \psi}{z} = z^2$ and $\phi = \State{\psi}$, while for $c = 0$, one gets $\CumFun{\phi, \psi}{z} = \frac{z^2}{1 - b z}$ and the appropriate analog of the Poisson law. On the other hand, for general $b,c$, $\CumFun{\phi, \psi}{z}$ does not itself satisfy a quadratic equation for general $\psi$, so the general two-state Laha-Lukacs distributions are not directly related to the free Meixner states. This was already observed in Section 4.4 of \cite{AnsBoolean}, where the Boolean Laha-Lukacs distributions (a particular case of the result in \cite{Boz-Bryc-Two-states}) were found to be the Bernoulli distributions, which include the Boolean analogs of the normal and Poisson laws, but do not include all the free Meixner distributions. On the other hand, if $\CumFun{\phi, \psi}{z} = \CumFun{\psi}{z}$, in which case also $\phi = \psi$, one recovers a quadratic equation, all free Meixner distributions, and free probability.
\end{Remark}

\section{The transformations $\mf{B}_{\mb{a}, t}$ and $\Phi[\rho, \psi]$}
\label{Section:Transformations}

\subsection{Properties of $\mf{B}_{\mb{a}, t}$}

\begin{Lemma}
\label{Lemma:B_t-semigroup}
The maps $\mf{B}_{\mb{a}, t}$ form a semigroup:
\[
\mf{B}_{\mb{a}, t} \circ \mf{B}_{\mb{b}, s} = \mf{B}_{\mb{a} + \mb{b}, t + s}.
\]
In particular, $\mf{B}_t$ and $\mf{B}_{\mb{a}, 0}$ commute, and
\[
\mf{B}_{\mb{a}, t}
= \mf{B}_t \circ \mf{B}_{\mb{a}, 0}.
\]
Moreover, for each $\rho$,
\[
\mf{B}_{(\rho[x_1], \ldots, \rho[x_d]), 0}[\rho]
\]
is the image of $\rho$ under the Boolean-to-Fermi version of the Bercovici-Pata bijection, in the sense of \cite{Oravecz-Fermi}.
\end{Lemma}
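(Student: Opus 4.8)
The plan is to reduce all three parts to a single explicit description of $\mf{B}_{\mb{a},t}$ as a flow on free cumulant generating functions, and then read off each assertion. Concretely, setting $A(\mb{z}) = \sum_{i=1}^d a_i z_i$, I would first show that if $\phi = \mf{B}_{\mb{a},t}[\rho]$ then there are conjugate variables
\[
w_i = (1 + t\,\CumFun{\rho}{\mb{z}} + A(\mb{z}))^{-1} z_i \quad\text{with}\quad \CumFun{\phi}{\mb{w}} = (1 + t\,\CumFun{\rho}{\mb{z}} + A(\mb{z}))^{-1}\CumFun{\rho}{\mb{z}}.
\]
This is obtained by pushing the definition $\mf{B}_{\mb{a},t}[\rho] = \bigl((\rho^{\boxplus(1+t)}\boxplus\delta_{\mb{a}})\uplus\delta_{-\mb{a}}\bigr)^{\uplus 1/(1+t)}$ through four ingredients: free convolution adds free cumulant functions and $\boxplus$-powers scale them by \eqref{Free-convolution}; Boolean convolution adds Boolean cumulant functions and $\uplus$-powers scale them; the single-state specialization of \eqref{eta-M-R} together with \eqref{z-M-w} gives $\eta^{\psi}(\mb{w}) = (1+\CumFun{\psi}{\mb{z}})^{-1}\CumFun{\psi}{\mb{z}}$ under the change of variables; and, since $\delta_{\mb{a}}$ is the free product of the $\delta_{a_i}$ (Example~\ref{Example:Conditional-freeness}(d)), both its free and its Boolean cumulant functions equal $A(\mb{z})$. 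The point masses then cancel the linear contribution coming from the inner $\uplus\delta_{-\mb{a}}$, leaving the displayed formula.

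Composing is then a formal-power-series calculation. Writing $R=\CumFun{\rho}{\mb{z}}$, $B(\mb{z})=\sum_i b_i z_i$, and $\phi_1=\mf{B}_{\mb{b},s}[\rho]$, the key observation is that under the change of variables attached to $\mf{B}_{\mb{b},s}$ the linear term transforms by the \emph{same} denominator, $A(\mb{w}) = (1+sR+B(\mb{z}))^{-1}A(\mb{z})$, so that applying the flow a second time the two denominators telescope:
\[
1 + t\,\CumFun{\phi_1}{\mb{w}} + A(\mb{w}) = (1+sR+B)^{-1}\bigl(1 + (s+t)R + (A+B)\bigr).
\]
Substituting back shows that $\mf{B}_{\mb{a},t}\circ\mf{B}_{\mb{b},s}$ has exactly the flow of $\mf{B}_{\mb{a}+\mb{b},\,t+s}$, which proves part (a). Part (b) is then immediate: $\mf{B}_t=\mf{B}_{\mb{0},t}$ and $\mf{B}_{\mb{a},0}$ commute by (a) and both compositions equal $\mf{B}_{\mb{a},t}$.

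For part (c) I would first recall from \cite{Oravecz-Fermi} the Fermi (CAR) convolution and identify its linearizing transform in the c-free language as the two-state cumulant $R^{\phi,\delta_{\mb{m}}}$ whose second state is the point mass at the mean vector $\mb{m}=(\phi[x_1],\ldots,\phi[x_d])$ of $\phi$, as suggested by Example~\ref{Example:Conditional-freeness}(d); because $\delta_{\mb{m}_1}\boxplus\delta_{\mb{m}_2}=\delta_{\mb{m}_1+\mb{m}_2}$ and means add under convolution, this choice of second state is stable, so these cumulants add under Fermi convolution. The Boolean-to-Fermi Bercovici--Pata bijection sends $\rho$ to the Fermi-infinitely-divisible $\phi$ with $R^{\phi,\delta_{\mb{m}}}=\eta^{\rho}$, where $\mb{m}$ is the common mean. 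It then remains to verify $R^{\mf{B}_{\mb{m},0}[\rho],\,\delta_{\mb{m}}}=\eta^{\rho}$. Setting $t=0$ and $\mb{a}=\mb{m}$ in the flow and unwinding \eqref{eta-M-R} for the second state $\delta_{\mb{m}}$, one computes the argument change $z_i'=(1+\CumFun{\rho}{\mb{z}})^{-1}z_i$ and finds that the single-state $\eta$--$R$ relation for $\rho$ makes the two sides coincide; equation~\eqref{Phi-rho-1+t} provides the bridge $\mf{B}_{\mb{m},0}[\rho]=\State{\rho,\rho\boxplus\delta_{\mb{m}}}$ that anchors the identification.

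I expect the semigroup computation (the first two steps), though the technical heart, to be routine once the flow is in hand. The genuinely delicate point is part (c): correctly extracting the Fermi-cumulant linearizing transform from \cite{Oravecz-Fermi} and matching normalizations, in particular explaining why the shift must be by the mean $\mb{m}$ rather than an arbitrary $\mb{a}$. This is exactly what forces the choice $\mb{a}=(\rho[x_1],\ldots,\rho[x_d])$ in the statement, and it is where the external input and the only real subtlety lie.
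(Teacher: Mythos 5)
Your proposal is correct, but it proves the lemma by a genuinely different route than the paper. The paper works combinatorially: it writes down an explicit moment--cumulant-type formula
\[
\eta^{\mf{B}_{\mb{a}, t}[\rho]}[x_{u(1)}, \ldots, x_{u(n)}] = \sum_{\pi \in \NC'(n)} \sum_{S \subset \Sing(\pi)} \Bigl( \prod_{i \in S} a_{u(i)} \Bigr) t^{\abs{S^c} - 1} \prod_{B \in S^c} \eta^\rho[x_{u(i)}: i \in B]
\]
(obtained ``by the same methods as in'' Belinschi--Nica's $\eta$-series paper), reads off the semigroup property and the commutation of $\mf{B}_t$ with $\mf{B}_{\mb{a},0}$ from it, and then specializes to $\mb{a}=\overrightarrow{\rho[x]}$, $t=0$ to match Oravecz's Definition~2.2 directly: the sum collapses to partitions in $\NC'(n)$ whose inner classes are singletons, which is exactly the Fermi moment--cumulant structure. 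You instead encode $\mf{B}_{\mb{a},t}$ as a flow on free cumulant generating functions with the telescoping denominator $1+tR+A$; I checked that your flow formula is consistent with the definition (the $\boxplus\delta_{\mb{a}}$ and $\uplus\delta_{-\mb{a}}$ contributions do cancel as you say, yielding $\eta^{\phi}=(1+(1+t)R+A)^{-1}R$, equivalent to your free-cumulant form), that the composition telescopes correctly even with the non-commutative left-inverse conventions of the paper, and that your verification of $R^{\mf{B}_{\mb{m},0}[\rho],\,\delta_{\mb{m}}}=\eta^{\rho}$ goes through (one finds $u_i=(1+R(\mb{z}))^{-1}z_i$, and the single-state $\eta$--$R$ relation closes the identity). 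What each approach buys: the paper's combinatorial formula, in particular the special case \eqref{Formula-B_a}, is reused later in the combinatorial proof of Theorem~\ref{Thm:Evolution}, so it earns its keep; your generating-function argument is more self-contained for the semigroup law and avoids deferring to the methods of \cite{Belinschi-Nica-Eta}. The one place where you are thinner than the paper is the identification of Oravecz's Fermi cumulants with the two-state cumulants $R^{\phi,\delta_{\mb{m}}}$: this is true (only partitions with singleton inner classes survive because $R^{\delta_{\mb{m}}}$ is linear), but proving it is essentially the same comparison with Oravecz's Definition~2.2 that the paper performs directly, so for part (c) the two arguments largely converge; you have correctly localized this as the only real external input.
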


\begin{proof}
The representation
\[
\eta^{\mf{B}_{\mb{a}, t}[\rho]}[x_i] = \eta^\rho[x_i] = \rho[x_i]
\]
and for $n > 1$, $\abs{\vec{u}} = n$,
\[
\eta^{\mf{B}_{\mb{a}, t}[\rho]}[x_{u(1)}, x_{u(2)}, \ldots, x_{u(n)}] = \sum_{\pi \in \NC'(n)} \sum_{S \subset \Sing(\pi)} \Bigl( \prod_{i \in S} a_{u(i)} \Bigr) t^{\abs{S^c} - 1} \prod_{B \in S^c} \eta^\rho[x_{u(i)}: i \in B]
\]
is obtained by the same methods as in \cite{Belinschi-Nica-Eta}. In particular,
\[
\eta^{\mf{B}_{0, t}[\rho]}[x_{u(1)}, x_{u(2)}, \ldots, x_{u(n)}] = \sum_{\pi \in \NC'(n)} t^{\abs{\pi} - 1} \prod_{B \in \pi} \eta^\rho[x_{u(i)}: i \in B]
\]
and
\begin{equation}
\label{Formula-B_a}
\eta^{\mf{B}_{\mb{a}, 0}[\rho]}[x_{u(1)}, x_{u(2)}, \ldots, x_{u(n)}] = \sum_{\set{1,n} \subset \Lambda \subset \set{1, 2, \ldots, n}} \Bigl( \prod_{i \not \in \Lambda} a_{u(i)} \Bigr) \eta^\rho[x_{u(i)}: i \in \Lambda]
\end{equation}
which are easily seen to commute, and which proves the semigroup property.

\br
Moreover, denoting $\overrightarrow{\rho[x]} = (\rho[x_1], \rho[x_2], \ldots, \rho[x_d])$,
\[
\begin{split}
\eta^{\mf{B}_{\overrightarrow{\rho[x]}, 0}[\rho]}[x_{u(1)}, x_{u(2)}, \ldots, x_{u(n)}]
& = \sum_{\set{1,n} \subset \Lambda \subset \set{1, 2, \ldots, n}} \Bigl( \prod_{i \not \in \Lambda} \rho[x_{u(i)}] \Bigr) \eta^{\rho} [x_{u(i)}: i \in \Lambda] \\
& = \sum_{\substack{\pi \in \NC'(n) \\ \Inner(\pi) \subset \Sing(\pi)}} \Bigl( \prod_{\set{i} \in \Inner(\pi)} \rho[x_{u(i)}] \Bigr)  \eta^{\rho} [x_{u(i)}: i \in B^o(\pi)].
\end{split}
\]
Using Definition~2.2 from \cite{Oravecz-Fermi}, it now easily follows that the Boolean cumulants of $\rho$ are the Fermi cumulants of $\mf{B}_{\overrightarrow{\rho[x]}, 0}[\rho]$. In other words, for each $\rho$, $\mf{B}_{\overrightarrow{\rho[x]}, 0}[\rho]$ is the image of $\rho$ under the Boolean-to-Fermi version of the Bercovici-Pata bijection.
\end{proof}

\subsection{Properties of $\State{\cdot, \cdot}$}

\begin{Thm}
\label{Thm:c-free-representation}
On $\mf{C} \langle x_1, x_2, \ldots, x_d \rangle$, let $\psi$ be a positive definite functional and $\mu$ a conditionally positive definite functional. Define a functional $\eta$ via
\[
M^\eta(\mb{w}) = (1 + M^\psi(\mb{w}))^{-1} M^\mu((1 + M^\psi(\mb{w})) \mb{w}) .
\]
Then $\eta$ is conditionally positive definite.
\end{Thm}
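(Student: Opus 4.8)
The plan is to turn the generating-function identity into an explicit moment formula for $\eta$ and then display $\eta\left[P^\ast P\right]$ as a squared norm in a Hilbert space built from $\psi$ and $\mu$. First I would unwind the definition. Writing $F = 1 + M^\psi(\mb{w})$ and $z_i = F w_i$, the leading factor $F^{-1}$ cancels the first copy of $F$, so that
\[
M^\eta(\mb{w}) = \sum_{\vec{u}} \mu\left[x_{\vec{u}}\right]\, w_{u(1)}\, F\, w_{u(2)}\, F \cdots F\, w_{u(n)} .
\]
Expanding each interior factor $F = 1 + M^\psi(\mb{w})$ and extracting the coefficient of $w_{r(1)} \cdots w_{r(m)}$ gives, for every word $\vec{r}$ of length $m \geq 1$,
\[
\eta\left[x_{\vec{r}}\right] = \sum_{1 = i_1 < \cdots < i_n = m} \mu\Bigl[x_{r(i_1)} \cdots x_{r(i_n)}\Bigr] \prod_{k=1}^{n-1} \psi\Bigl[\,\prod_{i_k < j < i_{k+1}} x_{r(j)}\Bigr],
\]
with empty $\psi$-factors read as $1$. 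Thus each moment of $\eta$ is a sum over a single outer $\mu$-block spanning positions $1$ through $m$, whose interior gaps are filled by single moments of $\psi$. Since conditional positive definiteness means precisely $\eta\left[P^\ast P\right] \geq 0$ for every $P$ without constant term, it suffices to realize this formula by operators.

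Next I would introduce two GNS-type constructions. Since $\psi$ is positive definite, let $(\mc{H}_\psi, \pi_\psi, \xi_\psi)$ be its GNS data, so that $\psi[a] = \ip{\xi_\psi}{\pi_\psi(a)\xi_\psi}$ and, crucially, $\psi\left[L^\ast R\right] = \ip{\pi_\psi(L)\xi_\psi}{\pi_\psi(R)\xi_\psi}$ for all $L, R$. Since $\mu$ is conditionally positive definite, the form $\ip{P}{Q}_\mu = \mu\left[P^\ast Q\right]$ is positive semidefinite on the augmentation ideal $\mc{A}_0$ of polynomials without constant term; let $\mc{K}_\mu$ be the associated Hilbert space. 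Because $\mc{A}_0$ is a two-sided ideal (the kernel of the character $\delta_0$), left multiplication descends to a $\ast$-representation $\lambda$ of $\mf{C}\langle \mb{x}\rangle$ on $\mc{K}_\mu$, and I write $\xi_i$ for the class of $x_i$, so that $\ip{\lambda(A)\xi_i}{\lambda(B)\xi_j}_\mu = \mu\left[x_i A^\ast B x_j\right]$.

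The heart of the argument is a cocycle $b \colon \mc{A}_0 \to \mc{K}_\mu \otimes \mc{H}_\psi$ satisfying $\ip{b(P)}{b(Q)} = \eta\left[P^\ast Q\right]$; conditional positivity then follows at once from $\eta\left[P^\ast P\right] = \norm{b(P)}^2 \geq 0$. On a word $x_{\vec{a}}$ I would let $b(x_{\vec{a}})$ be a sum, over increasing chains of positions of $\vec{a}$ through one distinguished boundary letter, of elementary tensors: the $\mc{K}_\mu$-component is $\lambda$ of the selected interior letters applied to the class of the boundary letter, the $\mc{H}_\psi$-component is $\pi_\psi$ of the remaining boundary block applied to $\xi_\psi$, and the whole is weighted by the $\psi$-moments of the blocks strictly between consecutive selected positions. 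The governing identity is the pairing
\[
\ip{\lambda(A)\xi_i \otimes \pi_\psi(L)\xi_\psi}{\lambda(B)\xi_j \otimes \pi_\psi(R)\xi_\psi} = \mu\left[x_i A^\ast B x_j\right] \cdot \psi\left[L^\ast R\right],
\]
in which the adjoints $A^\ast$ and $L^\ast$ exactly undo the reversal inherent in $P^\ast$: the first factor reconstructs the outer $\mu$-block and the second reconstructs the one gap of that block straddling the junction between $P^\ast$ and $Q$, while the interior gaps of $P$ and of $Q$ appear as the scalar weights. The precise ordering (which boundary, and the direction in which each block is read) is forced by requiring this pairing to reproduce the chain-sum for $\eta\left[P^\ast Q\right]$ term by term.

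The main obstacle is precisely this straddling gap. In $\eta\left[P^\ast P\right]$ the unique outer $\mu$-block runs from the last letter of $P^\ast$ to the last letter of $P$, and one of its $\psi$-gaps can span the center, coupling the two halves; positivity is therefore not blockwise and cannot be read off directly. It is the fact that $\psi$ is an honest state that rescues the argument: its GNS factorization $\psi\left[L^\ast R\right] = \ip{\pi_\psi(L)\xi_\psi}{\pi_\psi(R)\xi_\psi}$ splits the straddling moment into a left vector and a right vector, while the conditional positivity of $\mu$ supplies the Hilbert space $\mc{K}_\mu$ that splits the outer block. The remaining work is the bookkeeping needed to verify the pairing identity in general --- keeping track of the reversal and of which gaps are interior versus straddling --- which is routine but is where the combinatorics actually gets checked. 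I note in passing that, taking $\rho$ with $\CumFun{\rho}{\mb{z}} = M^\mu(\mb{z})$, this is exactly the assertion that $\State{\rho, \psi}$ is a state whenever $\rho$ is freely infinitely divisible.
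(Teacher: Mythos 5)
Your proposal is correct and is essentially the paper's own proof: the paper realizes $\eta[x_i x_{\vec{u}} x_j]$ as $\ip{\xi \otimes \zeta_i}{\prod_s (K_{u(s)} \otimes I + P_\xi \otimes H_{u(s)})(\xi \otimes \zeta_j)}$ on the tensor product of the GNS space of $\psi$ with the Sch\"urmann-type space of $\mu$, and your chain-sum vectors $b(x_{\vec{u}})$ are exactly what these symmetric operators produce when applied iteratively to $\xi \otimes \zeta_{u(n)}$ and expanded. The pairing identity you defer as ``routine bookkeeping'' is precisely the term-by-term matching the paper carries out.
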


\noindent
The proof is delayed until Section~\ref{Subsec:Operator-models}.

\begin{Cor}
\label{Cor:cpd-one-variable}
Let $f, g$ be power series whose coefficient sequences are positive definite. Then the coefficient sequence of
\[
f(z g(z)) g(z)
\]
is also positive definite.
\end{Cor}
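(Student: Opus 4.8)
The plan is to specialize Theorem~\ref{Thm:c-free-representation} to one variable ($d = 1$) and to choose the two inputs $\psi, \mu$ so that the output $\eta$ carries exactly the coefficient sequence of $f(zg(z))g(z)$. I will use throughout the standard one-variable dictionary: a sequence $(a_n)_{n \geq 0}$ is positive definite iff the Hankel matrices $(a_{i+j})_{i,j \geq 0}$ are positive semidefinite, iff the functional $\alpha$ on $\mf{C}\langle x \rangle$ with $\alpha[x^n] = a_n$ is positive definite; and a unital functional $\eta$ is conditionally positive definite iff the \emph{shifted} sequence $(\eta[x^n])_{n \geq 2}$ is positive definite, since $\eta[P^\ast P] = \sum_{k,l \geq 1} \bar{c}_k c_l\, \eta[x^{k+l}]$ for $P = \sum_{k \geq 1} c_k x^k$.

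Write $f(z) = \sum_{n \geq 0} f_n z^n$ and $g(z) = \sum_{n \geq 0} g_n z^n$. First I would dispose of the degenerate case: if $g_0 = 0$, positive definiteness of $(g_n)$ forces $g \equiv 0$ (a vanishing diagonal entry of a positive semidefinite matrix annihilates its row), so the target series is $0$ and there is nothing to prove. If $g_0 > 0$, I reduce to $g_0 = 1$ by replacing the pair $(f, g)$ with $\bigl(g_0 f(g_0 \cdot),\, g_0^{-1} g\bigr)$: this leaves the product $f(zg(z))g(z)$ unchanged, and both new coefficient sequences $(g_0^{n+1} f_n)_n$ and $(g_0^{-1} g_n)_n$ remain positive definite (dilation by $g_0$ followed by a positive rescaling).

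Assuming now $g_0 = 1$, I define $\psi$ and $\mu$ on $\mf{C}\langle x \rangle$ by
\[
1 + M^{\psi}(w) = g(w), \qquad M^{\mu}(z) = z^2 f(z).
\]
Since $g_0 = 1$, the series $g(w) - 1$ has no constant term, so $\psi$ is the unital functional with $\psi[x^n] = g_n$, and positive definiteness of $(g_n)$ is exactly positive definiteness of $\psi$; similarly $M^{\mu}$ has neither constant nor linear term, so $\mu$ is unital with $\mu[x^n] = f_{n-2}$ for $n \geq 2$, and by the dictionary conditional positive definiteness of $\mu$ is exactly positive definiteness of $(f_k)_{k \geq 0}$, our hypothesis on $f$. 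Theorem~\ref{Thm:c-free-representation} then yields a conditionally positive definite $\eta$ with
\[
M^{\eta}(w) = g(w)^{-1}\, M^{\mu}\bigl(w g(w)\bigr) = g(w)^{-1} \bigl(w g(w)\bigr)^2 f\bigl(w g(w)\bigr) = w^2\, f(w g(w))\, g(w).
\]
Hence $\eta[x^n] = h_{n-2}$ for $n \geq 2$, where $(h_m)_{m \geq 0}$ is the coefficient sequence of $f(zg(z))g(z)$, and by the dictionary the conditional positive definiteness of $\eta$ is precisely the positive definiteness of $(h_m)$ — the desired conclusion.

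I do not anticipate a real obstacle: Theorem~\ref{Thm:c-free-representation} does all the work, and the content of the corollary is the recognition that the substitution $1 + M^{\psi} = g$, $M^{\mu}(z) = z^2 f(z)$ turns the abstract composition formula into $w^2 f(wg(w))g(w)$. The only points requiring care are the translation between positive definite coefficient sequences and the paper's unital (conditionally) positive definite functionals, and the elementary normalization of $g_0$; the shift by $z^2$ in $M^{\mu}$ is exactly what is needed to convert conditional positive definiteness (a statement about the sequence from index $2$ onward) into positive definiteness of the full coefficient sequence of the product.
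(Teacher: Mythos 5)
Your proof is correct and follows essentially the same route as the paper's: both set $1 + M^{\psi} = g$ and $M^{\mu}(z) = z^2 f(z)$, apply Theorem~\ref{Thm:c-free-representation}, and use the one-variable equivalence between conditional positive definiteness of a sequence and positive definiteness of its shift by two. Your explicit treatment of the normalization $g_0 = 1$ (and of the degenerate case $g_0 = 0$) fills in a detail the paper's proof passes over silently, but it does not change the argument.
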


\begin{proof}
In a single variable, a sequence $\set{m_0, m_1, m_2, \ldots}$ is conditionally positive definite if and only if the sequence $\set{m_2, m_3, m_4, \ldots}$ is positive definite. In particular, the coefficient sequence of $z^2 f(z)$ is conditionally positive definite. So by Theorem~\ref{Thm:c-free-representation}, the coefficient sequence of
\[
g(z)^{-1} (z g(z))^2 f(z g(z)) = z^2 f(z g(z) g(z)
\]
is conditionally positive definite, and therefore the coefficient sequence of $f(z g(z)) g(z)$ is positive definite.
\end{proof}

\noindent
The following corollary was proved in Lemma 6.1 of \cite{Krystek-Conditional}, in the one-variable compactly supported case, by complex-analytic methods.

\begin{Cor}
\label{Cor:Phi-positive-definite}
Let $\psi$ be a state. For any conditionally positive functional $\mu$, there exists a state $\phi$ such that $R^{\phi, \psi} = M^\mu$. Equivalently, for any freely infinitely divisible state $\rho$, there exists a state $\phi$ such that $R^{\phi, \psi} = R^\rho$.
\end{Cor}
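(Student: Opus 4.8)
The plan is to define $\phi$ by prescribing its conditionally free cumulants and then to read off positivity directly from Theorem~\ref{Thm:c-free-representation}. Given the state $\psi$ and the conditionally positive definite functional $\mu$, I would set $\CumFun{\phi, \psi}{\mb{z}} := M^\mu(\mb{z})$ and let this, together with the defining relation~\eqref{eta-M-R}, determine $\phi$:
\[
\eta^\phi(\mb{w}) = (1 + M^\psi(\mb{w}))^{-1} \CumFun{\phi, \psi}{(1 + M^\psi(\mb{w})) \mb{w}} = (1 + M^\psi(\mb{w}))^{-1} M^\mu\bigl((1 + M^\psi(\mb{w})) \mb{w}\bigr).
\]
As an identity of formal power series this determines $\eta^\phi$ unambiguously, hence $M^\phi$ through $M^\phi = (1 - \eta^\phi)^{-1} - 1$, and therefore a unital linear functional $\phi$ satisfying $\CumFun{\phi, \psi}{\mb{z}} = M^\mu(\mb{z})$ by construction. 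The whole content of the corollary is then the positivity of this $\phi$.

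The key observation is that the right-hand side above is exactly the series $M^\eta$ of Theorem~\ref{Thm:c-free-representation}, formed from the same $\psi$ and $\mu$. Since $\psi$ is a state and $\mu$ is conditionally positive definite, that theorem yields that $\eta$ is conditionally positive definite; equivalently, the coefficient sequence of $\eta^\phi = M^\eta$, which is precisely the sequence of Boolean cumulants of $\phi$, is conditionally positive definite. To pass from here to the positivity of $\phi$ itself, I would invoke the Boolean analogue of Schoenberg's correspondence: a unital functional is a state if and only if the functional whose moments are its Boolean cumulants is conditionally positive definite (concretely, $\phi$ is realized as a vector state in the standard Boolean operator model obtained by a GNS construction from that conditionally positive definite Boolean cumulant functional). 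Applied to $\phi$, whose Boolean cumulant functional I have just shown to be conditionally positive definite, this gives that $\phi$ is a state with $\CumFun{\phi, \psi}{\mb{z}} = M^\mu(\mb{z})$, which is the first assertion.

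For the equivalent reformulation I would use the free L\'evy--Khintchine (Bercovici--Pata) correspondence: a state $\rho$ is freely infinitely divisible if and only if $\CumFun{\rho}{\mb{z}} = M^\mu(\mb{z})$ for some conditionally positive definite $\mu$. Thus, given a freely infinitely divisible $\rho$, one takes the associated $\mu$ and applies the first part to obtain a state $\phi$ with $\CumFun{\phi, \psi}{\mb{z}} = M^\mu(\mb{z}) = \CumFun{\rho}{\mb{z}}$, while conversely every conditionally positive definite $\mu$ arises from such a $\rho$, so the two statements carry the same information. The genuine positivity content is carried by Theorem~\ref{Thm:c-free-representation}, which I am assuming; within the present argument the step demanding care is the conversion of conditional positivity of the Boolean cumulant functional into positivity of $\phi$ through the Boolean Schoenberg correspondence (and, for the reformulation, the free version of it), everything else reducing to the formal power-series bookkeeping encoded in~\eqref{eta-M-R}.
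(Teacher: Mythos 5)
Your argument is correct and follows essentially the same route as the paper: both apply Theorem~\ref{Thm:c-free-representation} to conclude that the series $(1 + M^\psi(\mb{w}))^{-1} M^\mu((1 + M^\psi(\mb{w})) \mb{w})$ is the moment series of a conditionally positive definite functional, then identify that functional as the Boolean cumulant functional of a state $\phi$ (the paper states this identification in one line; you justify it via the Boolean operator model, which is exactly the content of Lemma~\ref{Lemma:Basic-operator-representations}), and both derive the second formulation from the fact that a state is freely infinitely divisible precisely when its free cumulant functional is conditionally positive definite. No gaps; your write-up merely makes explicit the Boolean Schoenberg step that the paper leaves implicit.
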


\begin{proof}
The conditionally positive functional $\eta$ obtained in Theorem~\ref{Thm:c-free-representation} is necessarily a Boolean cumulant functional of a state $\phi$. The second statement follows from the fact that the free cumulant functional of a freely infinitely divisible state is conditionally positive definite.
\end{proof}

\begin{Remark}[Schoenberg correspondence]
\label{Remark:Schoenberg}
Another interpretation of this result is in terms of the Schoenberg correspondence, see Corollary~3.6 of \cite{Franz-Unification} or \cite{SchurCondPos}. Let $\psi$ be any freely infinitely divisible state. A functional $\mu = R^\rho$ is conditionally positive if and only if it a generator of a c-free convolution semigroup of pairs of states $(\phi(t), \psi(t))$, in the sense that
\[
\mu = \frac{d}{dt}\Bigl|_{t=0}\Bigr. \phi(t),
\]
with $\psi(1) = \psi$. Indeed, for $\psi(t) = \psi^{\boxplus t}$, and $\phi(t)$ chosen so that
\[
R^{\phi(t), \psi^{\boxplus t}} = t M^\mu = t R^\rho = R^{\rho^{\boxplus t}},
\]
in other words for $\phi(t) = \State{\rho^{\boxplus t}, \psi^{\boxplus t}}$, the two properties above hold. The converse is standard. Note that we recover the Boolean version of the correspondence for $\psi = \delta_0$, and the free version for $\psi = \rho$.
\end{Remark}

\begin{Defn-Remark}
\label{DEfn-Remark-Monotone}
For two functionals $\tau, \psi$, the monotone convolution $\tau \rhd \psi$ of Muraki \cite{Franz-Muraki-Markov-monotone} is determined by
\begin{equation}
\label{Monotone-convolution}
(1 + M^{\tau \rhd \psi}(\mb{w})) = \Bigl(1 + M^\tau \bigl((1 + M^\psi(\mb{w})) \mb{w} \bigr) \Bigr) (1 + M^\psi(\mb{w})).
\end{equation}
So far, this operation has apparently been considered only in one variable, in which case this equation is equivalent to the condition
\[
F_{\tau \rhd \psi}(w) = F_{\tau}(F_\psi(w))
\]
on the reciprocal Cauchy transforms of the corresponding measures. It follows from Remark~\ref{Remark:Monotone} that the monotone convolution of states is a state.
\end{Defn-Remark}

\begin{Lemma}
\label{Lemma:Monotone-Phi}
Let $\psi$ be a state.
\begin{enumerate}
\item
Suppose that $R^\rho$ has the special form
\begin{equation}
\label{free-Phi}
\CumFun{\rho}{\mb{z}} = \sum_{i=1}^d z_i (1 + M^\tau(\mb{z})) z_i,
\end{equation}
which we will denote by $\rho = \Phi_{\text{free}}[\tau]$, where $\tau$ is a state. Note that $\Phi_{\text{free}}[\tau] = \mf{B}[\State{\tau}]$. Then
\begin{equation}
\label{Phi-monotone}
\State{\tau \rhd \psi} = \State{\Phi_{\text{free}}[\tau], \psi}.
\end{equation}
\item $\delta_{\mb{a}} \rhd \psi = \delta_{\mb{a}} \uplus \psi$.
\end{enumerate}
\end{Lemma}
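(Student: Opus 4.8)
The plan is to prove both parts by matching generating functions: for part (a) the Boolean cumulant series $\eta$, and for part (b) the full moment series $1 + M$. Throughout I write $S = 1 + M^\psi(\mb{w})$ for the relevant (non-commutative) power series in $\mb{w}$, and $S\mb{w}$ for the tuple $(1 + M^\psi(\mb{w}))\mb{w}$ in the sense of the paper's convention.

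For part (a) I would expand each side down to its $\eta$-series. On the right, apply the defining formula for the two-argument transformation to $\rho = \Phi_{\text{free}}[\tau]$: substituting $z_i = S w_i$ into $\CumFun{\rho}{\mb{z}} = \sum_i z_i (1 + M^\tau(\mb{z})) z_i$ gives $\CumFun{\rho}{S\mb{w}} = \sum_i (S w_i)(1 + M^\tau(S\mb{w}))(S w_i)$, and the prefactor $S^{-1}$ in $\eta^{\State{\rho,\psi}} = S^{-1}\CumFun{\rho}{S\mb{w}}$ cancels exactly the leftmost copy of $S$ in each summand, leaving
\[
\eta^{\State{\Phi_{\text{free}}[\tau], \psi}}(\mb{w}) = \sum_i w_i (1 + M^\tau(S\mb{w}))\, S\, w_i .
\]
On the left, the one-variable $\Phi$ formula gives $\eta^{\State{\tau \rhd \psi}}(\mb{w}) = \sum_i w_i (1 + M^{\tau \rhd \psi}(\mb{w})) w_i$, into which I substitute the monotone-convolution identity $1 + M^{\tau \rhd \psi}(\mb{w}) = (1 + M^\tau(S\mb{w}))\, S$ from \eqref{Monotone-convolution}. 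The two sides then coincide termwise, which proves (a).

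For part (b) I would first record $1 + M^{\delta_{\mb{a}}}(\mb{z}) = (1 - \sum_i a_i z_i)^{-1}$, obtained by summing the geometric series of moments of the evaluation functional $\delta_{\mb{a}}$. Plugging this into \eqref{Monotone-convolution} with $z_i = S w_i$, and using that the scalars $a_i$ commute through $S$, yields $1 + M^{\delta_{\mb{a}} \rhd \psi}(\mb{w}) = (1 - S A)^{-1} S$, where $A = \sum_i a_i w_i$. On the Boolean side, $\eta^{\delta_{\mb{a}}}(\mb{w}) = A$ and $\eta^\psi(\mb{w}) = 1 - S^{-1}$, so additivity of $\eta$ under $\uplus$ gives $1 + M^{\delta_{\mb{a}} \uplus \psi}(\mb{w}) = (S^{-1} - A)^{-1}$. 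The factorization $S^{-1} - A = S^{-1}(1 - S A)$ shows $(S^{-1} - A)^{-1} = (1 - S A)^{-1} S$, so the two moment series agree and the functionals are equal.

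These are essentially bookkeeping computations; the only real care needed, and the main (minor) obstacle, is the non-commutativity of the formal variables. One must track left-versus-right placement of $S$ at every step, in particular confirming that the $S^{-1}$ in (a) cancels on the correct side, and justify that each scalar $a_i$ may be moved past $S$ in (b). Finally, the parenthetical identity $\Phi_{\text{free}}[\tau] = \mf{B}[\State{\tau}]$ is immediate, since $\mf{B}$ sends Boolean cumulants to free cumulants and $\eta^{\State{\tau}}(\mb{z}) = \sum_i z_i (1 + M^\tau(\mb{z})) z_i$ by definition of the one-variable $\Phi$, so both functionals have free cumulant series $\sum_i z_i (1 + M^\tau(\mb{z})) z_i$.
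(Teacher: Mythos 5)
Your proposal is correct and follows essentially the same route as the paper: both parts are proved by the identical generating-function manipulations (cancelling the left factor of $1+M^\psi$ against the substituted $R^\rho$ and invoking \eqref{Monotone-convolution} for part (a); computing $1+M^{\delta_{\mb{a}}\rhd\psi}$ explicitly and comparing with the Boolean convolution for part (b), where comparing moment series rather than $\eta$-series is an immaterial difference). The only ingredient the paper includes that you omit is the observation (via Lemmas 12 and 13 of \cite{AnsMulti-Sheffer}) that $\Phi_{\text{free}}[\tau]$ is freely infinitely divisible when $\tau$ is a state, which is what places $\State{\Phi_{\text{free}}[\tau],\psi}$ within the stated domain of $\Phi$; this is a minor omission rather than a gap in the computation.
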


\begin{proof}
Since $\tau$ is a state, by Lemmas 12 and 13 of \cite{AnsMulti-Sheffer} $R^\rho$ is conditionally positive definite and so $\rho$ is freely infinitely divisible. Then
\[
(1 + M^\psi(\mb{w}))^{-1} \CumFun{\rho}{(1 + M^\psi(\mb{w})) \mb{w}}
= \sum_{i=1}^d w_i \Bigl(1 + M^\tau \bigl((1 + M^\psi(\mb{w})) \mb{w} \bigr) \Bigr) (1 + M^\psi(\mb{w})) w_i
\]
so that for $\phi = \State{\rho, \psi}$,
\[
\eta^\phi(\mb{w}) = \sum_{i=1}^d w_i (1 + M^{\tau \rhd \psi}(\mb{w})) w_i.
\]
Equation~\eqref{Phi-monotone} follows.

\br
For part (b), by definition
\[
\begin{split}
(1 + M^{\delta_{\mb{a}} \rhd \psi}(\mb{w}))
& = \Bigl(1 + M^{\delta_{\mb{a}}} \bigl((1 + M^\psi(\mb{w})) \mb{w} \bigr) \Bigr) (1 + M^\psi(\mb{w})) \\
& = \Bigl( 1 - \sum_{i=1}^d a_i (1 + M^\psi(\mb{w})) w_i \Bigr)^{-1} (1 + M^\psi(\mb{w})),
\end{split}
\]
so
\[
\begin{split}
\eta^{\delta_{\mb{a}} \rhd \psi}(\mb{w})
& = 1 - \Bigl(1 + M^{\delta_{\mb{a}} \rhd \psi}(\mb{w}) \Bigr)^{-1} \\
& = 1 - (1 + M^\psi(\mb{w}))^{-1} \Bigl( 1 - \sum_{i=1}^d a_i (1 + M^\psi(\mb{w})) w_i \Bigr) \\
& = 1 - (1 + M^\psi(\mb{w}))^{-1} + \sum_{i=1}^d a_i w_i
= \eta^{\delta_{\mb{a}} \uplus \psi}(\mb{w}). \qedhere
\end{split}
\]
\end{proof}

\begin{Thm}
\label{Thm:Phi-properties}
\br
\begin{enumerate}
\item
For $\phi = \State{\rho, \psi}$, any two of the functionals $(\phi, \rho, \psi)$ uniquely determine the third.
\item
$\State{\cdot, \cdot}$ is a well-defined map
\[
(\rho = \text{ freely infinitely divisible state, } \psi = \text{ state}) \mapsto (\phi = \text{ state}).
\]
From now on, unless stated otherwise, we will consider $\State{\cdot, \cdot}$ with these domain and range. Note that $\phi$ has the same mean and covariance as $\rho$.
\item
$\rho$ is the unique fixed point of the maps $\State{\rho, \cdot}$ and $\State{\cdot, \rho}$. In particular, a free product of standard semicircular distributions is the unique fixed point of $\Phi$.
\item
For fixed $\rho$ with mean zero and identity covariance, the image of the single-variable map $\State{\rho, \cdot}$ consists of all states with the same mean and covariance as $\rho$ if and only if $\rho = \mu_{0,0}$ is semicircular, in which case $\State{\mu_{0,0}, \psi} = \State{\psi}$, or more generally if $\rho = \mu_{b,0}$ is centered free Poisson, in which case
\[
\State{\mu_{b,0}, \psi} = \State{\psi \uplus \delta_b}.
\]
In several variables, $\State{\rho, \cdot}$ is never onto.
\item
For fixed $\psi$, the map $\State{\cdot, \psi}$ is onto if and only if $\psi = \delta_0$, in which case $\State{\rho, \delta_0} = \mf{B}^{-1}[\rho]$, or more generally if $\psi = \delta_{\mb{a}}$, in which case
\[
\State{\rho, \delta_{\mb{a}}} = \mf{B}_{\mb{a}, -1}[\rho] = \mf{B}_{\mb{a}, 0} \circ \mf{B}^{-1}[\rho].
\]
\end{enumerate}
\end{Thm}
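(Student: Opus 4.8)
The plan is to dispatch parts (a)--(c) from the generating-function identities and the earlier results, and to concentrate the real work on the range descriptions (d) and (e). For (a) I would use the two defining relations $\CumFun{\rho}{\mb{z}}=\CumFun{\phi,\psi}{\mb{z}}$ and $\eta^\phi(\mb{w})=(1+M^\psi(\mb{w}))^{-1}\CumFun{\rho}{(1+M^\psi(\mb{w}))\mb{w}}$. Here $(\rho,\psi)\mapsto\phi$ is the definition; $(\phi,\psi)\mapsto\rho$ is the change of variables \eqref{z-M-w}, which gives $\CumFun{\rho}{\mb{z}}=(1+\CumFun{\psi}{\mb{z}})\,\eta^\phi(\mb{w}(\mb{z}))$; and $(\phi,\rho)\mapsto\psi$ is solved from $F\,\eta^\phi(\mb{w})=\CumFun{\rho}{F\mb{w}}$ with $F=1+M^\psi$, by induction on degree, the degree-$n$ part of $M^\psi$ being pinned down at total degree $n+2$ with the (normalized, hence invertible) degree-$2$ part of $\CumFun{\rho}{\mb{z}}$ as leading factor. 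Part (b) is then Corollary~\ref{Cor:Phi-positive-definite} together with uniqueness from (a), the equality of means and covariances following by reading off degrees $1$ and $2$ of the defining identity (the factors $(1+M^\psi)^{\pm1}$ cancel). For (c), $\State{\rho,\psi}=\psi$ forces $\CumFun{\rho}{}=R^{\psi,\psi}=\CumFun{\psi}{}$, so $\rho=\psi$; and $\State{\sigma,\rho}=\sigma$ forces $R^{\sigma,\rho}=\CumFun{\sigma}{}=R^{\sigma,\sigma}$, whence $\rho=\sigma$ by the uniqueness in (a). The final assertion is the special case $\State{\gamma,\cdot}=\Phi$.

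I would open (e) by characterizing the range. Since $R^{\State{\rho,\psi},\psi}=\CumFun{\rho}{}$ is conditionally positive definite, and conversely a conditionally positive definite functional is the free cumulant functional of a freely infinitely divisible state, the image of $\State{\cdot,\psi}$ is exactly $\set{\phi \text{ a state}: R^{\phi,\psi}\text{ is conditionally positive definite}}$. Thus onto-ness means $R^{\phi,\psi}$ is conditionally positive definite for \emph{every} state $\phi$. I would then test $\phi=\mu_{0,-1}$, for which $\eta^\phi(z)=z^2$: one computes $R^{\phi,\psi}(z)=z^2/(1+\CumFun{\psi}{z})=z^2-R_1^\psi z^3+((R_1^\psi)^2-R_2^\psi)z^4-\cdots$, and conditional positive-definiteness forces the $2\times2$ Hankel minor of the shifted sequence $(1,-R_1^\psi,(R_1^\psi)^2-R_2^\psi,\ldots)$ to be nonnegative, i.e.\ $-R_2^\psi=-\Var{\psi}\ge0$. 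Hence $\Var{\psi}=0$ and $\psi=\delta_{\mb{a}}$. The converse and the formulas are immediate: \eqref{Phi-rho-1+t} with $1+t=0$ gives $\State{\rho,\delta_{\mb{a}}}=\mf{B}_{\mb{a},-1}[\rho]$, Lemma~\ref{Lemma:B_t-semigroup} gives $\mf{B}_{\mb{a},-1}=\mf{B}_{\mb{a},0}\circ\mf{B}^{-1}$, and onto-ness follows since $\mf{B}^{-1}$ maps freely infinitely divisible states onto all states and $\mf{B}_{\mb{a},0}$ is a bijection.

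For (d) the image of $\State{\rho,\cdot}$ is $\set{\phi:\psi(\phi)\text{ positive}}$, where $\psi(\phi)$ is the functional from (a). In one variable, writing the free cumulants of $\rho$ as $(0,1,r_3,r_4,\ldots)$ and solving $F\,\eta^\phi=\CumFun{\rho}{F\mb{w}}$ to second order yields $\Var{\psi(\phi)}=\gamma_2(\phi)+(r_3^2-r_4)$, where $\gamma_2(\phi)$ is the second Jacobi parameter of $\phi$. Taking $\phi$ a two-point normalized measure ($\gamma_2(\phi)=0$), surjectivity forces $r_4\le r_3^2$; but free infinite divisibility forces $(1,r_3,r_4,\ldots)$ to be a moment sequence, hence $r_4\ge r_3^2$. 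Therefore $r_4=r_3^2$, the free Lévy measure $\tau$ of $\rho$ (with moments $\tau_k=r_{k+2}$) has zero variance, so $\tau=\delta_b$ and $\CumFun{\rho}{z}=z^2/(1-bz)$, i.e.\ $\rho=\mu_{b,0}$. The ``if'' directions use $\State{\mu_{0,0},\cdot}=\Phi$ with the Jacobi-parameter shift description of $\Phi$ (every normalized state has $\beta_0=0,\gamma_1=1$, hence lies in the image), and $\State{\mu_{b,0},\psi}=\State{\delta_b\uplus\psi}$ from Lemma~\ref{Lemma:Monotone-Phi}, with $\psi\mapsto\delta_b\uplus\psi$ a bijection of states. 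In several variables I would rule out surjectivity by a structural count at order three: with $\CumFun{\rho}{}$ fixed, the moment-cumulant formula shows $\phi[x_ix_jx_k]$ depends on $\psi$ only through the $d$ means $R_1^\psi$ (entering the inner-singleton term with coefficient $R^\rho_{ik}=\delta_{ik}$), so the image meets only a $\le d$-parameter family of order-three moment data, while general normalized states vary in $\Theta(d^3)$ such parameters when $d\ge2$.

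The main obstacle I anticipate is the non-surjectivity half of (d): isolating the right necessary condition. The decisive point is that surjectivity and free infinite divisibility push $r_4-r_3^2$ from opposite sides, pinning it to zero and thereby forcing the free Lévy measure of $\rho$ to be a point mass; extracting this cleanly requires solving the implicit equation for $\psi(\phi)$ to second order and recognizing the second Jacobi parameter of the test state in the resulting variance. By contrast, once the order-three dependence of $\phi$ on $\psi$ is isolated, the several-variable claim is comparatively soft, and parts (a)--(c) and (e) reduce to the structural identities already in hand.
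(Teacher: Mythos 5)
Your proposal is correct, and for parts (a), (b), (c), and (e) it follows essentially the same route as the paper: uniqueness via inverting the defining generating-function identity, positivity from Corollary~\ref{Cor:Phi-positive-definite}, fixed points from $\State{\psi,\psi}=\psi$ plus uniqueness, and the Bernoulli test state forcing $\Var{\psi}=0$ (your one-variable Hankel-minor computation of $-R_2^\psi\ge 0$ is exactly the one-dimensional shadow of the paper's observation that the covariance matrices of $\psi$ and of the auxiliary state $\tau$ with $1+M^\tau=(1+\CumFun{\psi}{\mb z})^{-1}$ differ by a sign and hence both vanish). Where you genuinely diverge is part (d). The paper runs the \emph{same} Bernoulli test once and extracts the full structural conclusion $\CumFun{\rho}{\mb z}=\sum_i z_i(1+\CumFun{\psi}{\mb z})^{-1}z_i$, identifying $\rho=\Phi_{\text{free}}[\delta_{\mb a}]$ as a free product of centered free Poissons simultaneously with $\psi=\delta_{\mb a}$; you instead squeeze only the degree-four coefficient, playing $\Var{\psi(\phi)}=\gamma_2(\phi)+(r_3^2-r_4)\ge 0$ for a two-point test state against $r_4\ge r_3^2$ from free infinite divisibility of $\rho$, which pins the free L\'evy measure to a point mass. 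Both work; the paper's version applies uniformly in all dimensions, while yours is a cleaner self-contained one-variable calculation. Likewise, for non-surjectivity in several variables the paper defers to the proof of Theorem~6 of \cite{AnsAppell3} (most free Meixner states are not in the image of $\Phi$), whereas your order-three count --- $\phi[x_ix_jx_k]=\Cum{\rho}{x_i,x_j,x_k}+\delta_{ik}\psi[x_j]$ depends on $\psi$ only through its $d$ means --- is elementary and self-contained, which is a modest gain. One small point of care: in part (e) you should say explicitly that the one-variable computation, applied to each coordinate restriction, gives $\psi[x_i^2]=\psi[x_i]^2$ for all $i$ and hence multiplicativity of $\psi$, to land on $\psi=\delta_{\mb a}$ in the multivariate statement.
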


\begin{proof}
For part (a), we note that $R^\rho = R^{\phi, \psi}$, $\phi = \State{\rho, \psi}$, and $\psi$ is determined by
\begin{equation}
\label{psi-determined}
\Bigl((1 + M^\psi(\mb{w})) w_1, \ldots, (1 + M^\psi(\mb{w})) w_d \Bigr) = (\mb{D} R^\rho)^{\langle -1 \rangle} \Bigl( \mb{D} \eta^\phi(\mb{w}) \Bigr),
\end{equation}
where $(\mb{D} R^\rho)^{\langle -1 \rangle}(\mb{z})$ is the inverse of the $d$-tuple of power series
\[
\Bigl( D_1 \CumFun{\rho}{z_1, \ldots, z_d}, \ldots, D_d \CumFun{\rho}{z_1, \ldots, z_d} \Bigr)
\]
with respect to composition.

\br
Part (b) is a re-formulation of Corollary~\ref{Cor:Phi-positive-definite}.

\br
Since $\CumFun{\psi, \psi}{\mb{z}} = \CumFun{\psi}{\mb{z}}$, $\State{\psi, \psi} = \psi$. So by part (a), $\State{\rho, \psi} = \psi = \State{\psi, \psi}$ if and only if $\psi = \rho$. Similarly, $\State{\rho, \psi} = \rho = \State{\rho, \rho}$ if and only if $\rho = \psi$.

\br
If the map $\State{\cdot, \cdot}$ is onto, in particular the Bernoulli distribution, with $\eta^\phi(\mb{w}) = \sum_{i=1}^d w_i^2$, is in the image. In this case, using equation~\eqref{D-eta-D-R},
\[
(D_i R^{\rho}) \left((1 + M^\psi(\mb{w})) \mb{w} \right) = D_i \eta^\phi(\mb{w}) = w_i,
\]
so that
\[
D_i \CumFun{\rho}{\mb{z}} = \Bigl(1 + \CumFun{\psi}{\mb{z}} \Bigr)^{-1} z_i
\]
and
\[
\CumFun{\rho}{\mb{z}} = \sum_{i=1}^d z_i \Bigl(1 + \CumFun{\psi}{\mb{z}} \Bigr)^{-1} z_i.
\]
Thus in the notation of the preceding Lemma, $\rho = \Phi_{\text{free}}[\tau]$, where
\begin{equation}
\label{M-R-inverse}
1 + M^\tau(\mb{z}) = \Bigl(1 + \CumFun{\psi}{\mb{z}} \Bigr)^{-1}
\end{equation}
Since $\rho$ is freely infinitely divisible, $R^\rho$ is conditionally positive definite, and therefore $\tau$ is positive definite. Indeed, for any $i$
\[
\tau[P(\mb{x})^\ast P(\mb{x})]
= \Cum{\rho}{x_i P(\mb{x})^\ast P(\mb{x}) x_i}
= \Cum{\rho}{(P(\mb{x}) x_i)^\ast (P(\mb{x}) x_i)} \geq 0.
\]
On the other hand, from equation \eqref{M-R-inverse}
\begin{multline*}
1 + \sum_i \tau[x_i] z_i + \sum_{i,j} \tau[x_i x_j] z_i z_j + \ldots \\
= 1 - \sum_i \Cum{\psi}{x_i} z_i - \sum_{i,j} \Cum{\psi}{x_i x_j} z_i z_j + \Bigl( \sum_i \Cum{\psi}{x_i} z_i \Bigr) \Bigl( \sum_j \Cum{\psi}{x_j} z_j \Bigr),
\end{multline*}
so that $\tau[x_i] = - \psi[x_i]$ and
\[
\Cum{\tau}{x_i x_j} = \tau[x_i x_j] - \tau[x_i] \tau[x_j] = - \Cum{\psi}{x_i x_j}.
\]
Thus the covariance matrices of $\psi, \tau$ differ by a sign. On the other hand, since $\tau, \psi$ are positive definite, so are their covariance matrices. It follows that these matrices are both zero. Therefore both $\tau$ and $\psi$ are multiplicative linear functionals, and so delta measures.

\br
If $\tau = \delta_{\mb{a}}$, then $1 + M^\tau(\mb{z}) = (1 - \sum_{i=1}^d a_i z_i)^{-1}$ and $\CumFun{\rho}{\mb{z}} = \sum_{j=1}^d z_j (1 - \sum_{i=1}^d a_i z_i)^{-1} z_j$, so that
\[
D_i D_j \CumFun{\rho}{\mb{z}} = \delta_{ij} + a_i D_j \CumFun{\rho}{\mb{z}}
\]
and $\rho$ is the free Meixner state $\phi_{\set{a_i I}, 0}$, the free product of centered free Poisson distributions. $\rho$ has the special form in the equation~\eqref{free-Phi}, so by the preceding Lemma,
\[
\State{\Phi_{\text{free}}[\delta_{\mb{a}}], \psi}
= \State{\delta_{\mb{a}} \rhd \psi}
= \State{\delta_{\mb{a}} \uplus \psi}.
\]
In several variables, $\Phi$ is not onto all the states with mean zero and identity covariance, for example it follows from the proof of Theorem~6 of \cite{AnsAppell3} that most of the free Meixner states are not in its image. Therefore in this case, $\State{\rho, \cdot}$ is never onto. In one variable, by the arguments used in the proof of Corollary~\ref{Cor:cpd-one-variable}, $\Phi$ is onto. Finally,
\[
\State{\cdot, \delta_{\mb{a}}} = \mf{B}_{\mb{a}, 0}[\State{\cdot, \delta_0}] = \mf{B}_{\mb{a}, 0} \circ \mf{B}^{-1}
\]
are all bijections from freely infinitely divisible states onto all states.
\end{proof}

\begin{Thm}
\label{Thm:Evolution}
Let $\rho$ be a freely infinitely divisible state with the free convolution semigroup $\set{\rho_t}$.
\begin{enumerate}
\item
\[
\State{\rho^{\boxplus t} \boxplus \delta_{\mb{a}}, \psi} = \State{\rho, \psi}^{\uplus t} \uplus \delta_{\mb{a}}.
\]
\item
\[
\State{\rho, \psi \boxplus \rho_t} = \mf{B}_t[\State{\rho, \psi}].
\]
Equivalently, if $R^{\phi, \psi} = R^\rho$, then $R^{\mf{B}_t[\phi], \psi \boxplus \rho_t} = R^{\phi, \psi} = R^\rho$ does not depend on $t$. More generally,
\[
\State{\rho, \psi \boxplus \rho^{\boxplus t} \boxplus \delta_{\mb{a}}}
= \mf{B}_{\mb{a}, t}[\State{\rho, \psi}]
= \mf{B}_t[\State{\rho, \psi \boxplus \delta_\mb{a}}]
= \mf{B}_{\mb{a}, 0}[\State{\rho, \psi \boxplus \rho_t}].
\]
\end{enumerate}
\end{Thm}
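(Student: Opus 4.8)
I would treat the two parts separately. For part~(a), set $\phi = \State{\rho,\psi}$ and use that free convolution adds free cumulants while $\CumFun{\delta_{\mb{a}}}{\mb{z}} = \sum_i a_i z_i$, so that $\CumFun{\rho^{\boxplus t}\boxplus\delta_{\mb{a}}}{\mb{z}} = t\,\CumFun{\rho}{\mb{z}} + \sum_i a_i z_i$. Substituting this into the defining relation $\eta^{\State{\rho',\psi}}(\mb{w}) = (1+M^\psi(\mb{w}))^{-1}\CumFun{\rho'}{(1+M^\psi(\mb{w}))\mb{w}}$ with $\rho' = \rho^{\boxplus t}\boxplus\delta_{\mb{a}}$, the prefactor $(1+M^\psi(\mb{w}))^{-1}$ distributes over the sum: the term $t\,\CumFun{\rho}{(1+M^\psi(\mb{w}))\mb{w}}$ reassembles into $t\,\eta^\phi(\mb{w})$, while the linear term collapses to $\sum_i a_i w_i = \eta^{\delta_{\mb{a}}}(\mb{w})$. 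Since Boolean convolution adds $\eta$-series, the total is $\eta^{\phi^{\uplus t}\uplus\delta_{\mb{a}}}(\mb{w})$, which is exactly part~(a).

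For the core equation $\State{\rho,\psi\boxplus\rho_t} = \mf{B}_t[\State{\rho,\psi}]$, the plan is to compare the Boolean cumulants of the two sides combinatorially, using two expansions. First, restricting the two-state moment--cumulant formula to the irreducible partitions $\NC'(n)$ that define the Boolean cumulants gives, for any state $\nu$,
\[
\eta^{\State{\rho,\nu}}[x_{u(1)},\ldots,x_{u(n)}] = \sum_{\pi\in\NC'(n)} R^\rho[x_{u(i)} : i\in B^o(\pi)] \prod_{C\in\Inner(\pi)} R^\nu[x_{u(j)} : j\in C],
\]
since $R^{\phi,\nu}=R^\rho$ for $\phi=\State{\rho,\nu}$ and every $\pi\in\NC'(n)$ has a single outer block $B^o(\pi)$, all other blocks being inner. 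Second, Lemma~\ref{Lemma:B_t-semigroup} writes $\eta^{\mf{B}_t[\phi]}$ as $\sum_{\theta\in\NC'(n)} t^{\abs\theta-1}\prod_{B\in\theta}\eta^\phi[B]$. Feeding the first expansion (for $\phi=\State{\rho,\psi}$, i.e.\ $\nu=\psi$) into the second produces a double sum indexed by $\theta\in\NC'(n)$ together with a choice of $\tau_B\in\NC'(B)$ for each $B\in\theta$, weighted by $t^{\abs\theta-1}$ times one factor $R^\rho$ per outer block $B^o(\tau_B)$ and factors $R^\psi$ on the inner blocks of the $\tau_B$. Applying the first expansion directly to $\State{\rho,\psi\boxplus\rho_t}$, using $R^{\psi\boxplus\rho_t} = R^\psi + t\,R^\rho$, and expanding each inner factor $R^\psi[C] + t\,R^\rho[C]$, produces instead a sum over $\pi\in\NC'(n)$ whose inner blocks are independently ``colored'' $R^\psi$ or $t\,R^\rho$, weighted by $t^{k-1}$ where $k$ is the number of blocks carrying $R^\rho$.

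The heart of the argument, and the step I expect to be the main obstacle, is a weight-preserving bijection between these two index sets. Given $(\theta,(\tau_B))$ I would set $\pi = \bigcup_{B\in\theta}\tau_B$ and color the outer block $B^o(\tau_B)$ of each piece by $R^\rho$ and all its inner blocks by $R^\psi$; since $\theta\in\NC'(n)$ forces the $\theta$-block of $1$ to contain $n$ as well, the outer block of the resulting $\pi$ is one of the $R^\rho$-blocks, so $\pi\in\NC'(n)$ with $B^o(\pi)$ colored $R^\rho$. Conversely, from such a colored $\pi$ I would recover $\theta$ by assigning every $R^\psi$-block to the nearest enclosing $R^\rho$-block (skipping over $R^\psi$-blocks), the groups so formed being the blocks of $\theta$ and $\tau_B := \pi|_B$. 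The nesting structure of non-crossing partitions guarantees that the unique $R^\rho$-block inside each group $B$ contains $\min(B)$ and $\max(B)$, so $\tau_B\in\NC'(B)$, and that the group containing $1$ also contains $n$, so $\theta\in\NC'(n)$. Under this correspondence $k=\abs\theta$, hence the powers of $t$ and the products of cumulants match term by term, proving the core equation. An independent operator-model proof is given in Section~\ref{Subsec:Operator-models}.

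Finally, for the general statement I would reduce to the core equation using the semigroup law $\mf{B}_{\mb{a},t} = \mf{B}_t\circ\mf{B}_{\mb{a},0} = \mf{B}_{\mb{a},0}\circ\mf{B}_t$ from Lemma~\ref{Lemma:B_t-semigroup}, together with the auxiliary identity $\State{\rho,\psi\boxplus\delta_{\mb{a}}} = \mf{B}_{\mb{a},0}[\State{\rho,\psi}]$. This auxiliary identity is proved by the same method: expanding the Boolean cumulants of $\State{\rho,\psi\boxplus\delta_{\mb{a}}}$ via the first expansion above and splitting each inner singleton factor $R^\psi[x_i] + a_i$ matches, term by term, the formula~\eqref{Formula-B_a} for $\mf{B}_{\mb{a},0}$, under the bijection that deletes exactly the inner singletons assigned the scalar $a_i$. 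Then applying the core equation with $\psi$ replaced by $\psi\boxplus\delta_{\mb{a}}$ gives $\State{\rho,\psi\boxplus\rho_t\boxplus\delta_{\mb{a}}} = \mf{B}_t[\State{\rho,\psi\boxplus\delta_{\mb{a}}}]$, and the auxiliary identity together with the commuting factorizations of $\mf{B}_{\mb{a},t}$ converts this into the remaining two displayed forms.
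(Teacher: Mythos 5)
Your proposal is correct and follows essentially the same route as the paper: part (a) by the generating-function substitution of Lemma~\ref{Lemma:boxplus-uplus}, the core identity by expanding both sides over $\NC'(n)$ with blocks colored by $R^\rho$ or $R^\psi$ and matching terms via the "group each inner block with its nearest enclosing $\rho$-colored block" bijection (the paper's map $j \stackrel{\omega}{\sim} i$ for the largest $i$ with $i,i' \in B \in V$, $i \le j < i'$), and the general statement by reducing to $\State{\rho,\psi\boxplus\delta_{\mb{a}}} = \mf{B}_{\mb{a},0}[\State{\rho,\psi}]$ via formula~\eqref{Formula-B_a} and the semigroup property. No substantive differences.
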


\noindent
We will present two proofs of this theorem. The combinatorial proof has all the details and works for general functionals. On the other hand, the operator-representation proof in the next section may be more illuminating.

\begin{proof}[Combinatorial proof]
Part (a) follows by exactly the same method as Lemma~\ref{Lemma:boxplus-uplus}.

\br
For part (b), on one hand, by expanding the defining relation~\eqref{eta-M-R},
\[
\begin{split}
& \eta^{\State{\rho, \psi \boxplus \rho_t}}[x_1, x_2, \ldots, x_n] \\
&\qquad = \sum_{\substack{\Lambda \subset \set{1, \ldots, n} \\ \Lambda = \set{u(0) = 1, u(1), u(2), \ldots, u(k) = n}}} \Cum{\rho}{x_i : i \in \Lambda} \prod_{j=1}^k (\psi \boxplus \rho_t)[x_i: u(j-1)+1 \leq i \leq u(j)-1].
\end{split}
\]
Combining this with the definition~\eqref{Free-convolution} of free convolution and expansion~\eqref{R-cumulant-moment}, we get
\begin{multline}
\label{Phi-psi-rho}
\eta^{\State{\rho, \psi \boxplus \rho_t}}[x_1, x_2, \ldots, x_n] \\
= \sum_{\pi \in \NC'(n)} \sum_{V: B^o(\pi) \in V \subset \pi} t^{\abs{V} - 1} \prod_{B \in V} \Cum{\rho}{x_i: i \in B} \prod_{C \in \pi \backslash V} \psi[x_i: i \in C].
\end{multline}
On the other hand,
\[
\begin{split}
& \eta^{\State{\rho, \psi}}[x_1, x_2, \ldots, x_n] \\
&\qquad = \sum_{\substack{\Lambda \subset \set{1, \ldots, n} \\ \Lambda = \set{u(0) = 1, u(1), u(2), \ldots, u(k) = n}}} \Cum{\rho}{x_i : i \in \Lambda} \prod_{j=1}^k \psi[x_i: u(j-1)+1 \leq i \leq u(j)-1] \\
&\qquad = \sum_{\sigma \in \NC'(n)} \Cum{\rho}{x_i: i \in B^o(\sigma)} \prod_{C \in \Inner(\pi)} \Cum{\psi}{x_i: i \in C}.
\end{split}
\]
Also, by Remark 4.4 of \cite{Belinschi-Nica-Free-BM},
\[
\eta^{\mf{B}_t[\phi]}[x_1, x_2, \ldots, x_n]
= \sum_{\omega \in \NC'(n)} t^{\abs{\omega} - 1} \prod_{B \in \omega} \eta^\phi[x_i: i \in B].
\]
Thus
\begin{equation}
\label{B-rho-psi}
\begin{split}
& \eta^{\mf{B}_t[\State{\rho, \psi}]}[x_1, x_2, \ldots, x_n] \\
&\qquad = \sum_{\omega \in \NC'(n)} t^{\abs{\omega} - 1} \prod_{B \in \omega} \sum_{\sigma \in \NC'(B)} \Cum{\rho}{x_i: i \in B^o(\sigma)} \prod_{C \in \Inner(\pi)} \Cum{\psi}{x_i: i \in C}.
\end{split}
\end{equation}
Clearly every term of the sum \eqref{B-rho-psi} appears in the sum \eqref{Phi-psi-rho} for some pair $(\pi, V)$. It remains to show the converse, namely that each such pair corresponds to the unique collection
\[
\set{\omega = (B_1, B_2, \ldots, B_k) \in \NC'(n), \set{\sigma_i \in \NC'(B_i)}}
\]
with
\begin{equation}
\label{pi-sigma}
i \stackrel{\pi}{\sim} j \Leftrightarrow i \stackrel{\sigma_s}{\sim} j \text{ for some } s
\end{equation}
and
\begin{equation}
\label{V-union}
V = \cup_{i=1}^k B_o(\sigma_i).
\end{equation}
This correspondence is very closely related to Lemma 3 of \cite{AnsFree-Meixner} and Remark 6.3 of \cite{Belinschi-Nica-Free-BM}. Namely, define $\omega$ as follows: for any $j$, let $j \stackrel{\omega}{\sim} i$ for $i$ the largest element with the property that
\[
i, i' \in B \in V, \quad i \leq j < i'.
\]
Note that since $B^o(\pi) \in V$, such an $i$ always exists. Clearly $\omega \in \NC'(n)$ and $\pi \leq \omega$, so we can define each $\sigma_s$ via equation \eqref{pi-sigma}. For each $B \in \omega$, $\min(B)$ and $\max(B)$ lie in the same class of $\pi$ which in fact belongs to $V$. Relation \eqref{V-union} and $\sigma_i \in \NC'(B_i)$ follow.

\br
For the second equation in part (b), it suffices to show that
\[
\mf{B}_{\mb{a}, 0}[\State{\rho, \psi}] = \State{\rho, \psi \boxplus \delta_\mb{a}},
\]
which follows by very similar methods using representation~\eqref{Formula-B_a}.
\end{proof}

\begin{Prop}
\label{Prop:Meixner-characterization}
Let $\rho, \psi$ be freely infinitely divisible distributions, so that $\phi = \State{\rho, \psi}$, by definition, is a c-freely infinitely divisible distribution. Suppose $\phi$ is a free Meixner distribution. Then all the distributions in its semigroup
\[
\phi(t) = \State{\rho^{\boxplus t}, \psi^{\boxplus t}}
\]
from Remark~\ref{Remark:Schoenberg} are (un-normalized) free Meixner if and only if $\rho$ is free Meixner and $\psi = \rho^{\boxplus (1+s)} \boxplus \delta_{\alpha}$. Thus, while the Boolean ($\psi = \delta_0$) and free ($\psi = \rho$) evolutions preserve the Meixner class, general c-free evolution does not.
\end{Prop}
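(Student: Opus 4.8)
The statement is an equivalence; the ``if'' direction is a short assembly of earlier results, while the ``only if'' direction carries the content. For ``if'', suppose $\rho$ is free Meixner and $\psi = \rho^{\boxplus(1+s)}\boxplus\delta_\alpha$. Then $\psi^{\boxplus t} = (\rho^{\boxplus t})^{\boxplus(1+s)}\boxplus\delta_{t\alpha}$, so writing $\sigma = \rho^{\boxplus t}$ and invoking \eqref{Phi-rho-1+t},
\[
\phi(t) = \State{\sigma, \sigma^{\boxplus(1+s)}\boxplus\delta_{t\alpha}} = \mf{B}_{t\alpha, s}[\rho^{\boxplus t}].
\]
Since $\rho^{\boxplus t}$ is an un-normalized free Meixner distribution and $\mf{B}_{\mb{a}, t}$ preserves the free Meixner class (Lemma~\ref{Lemma:Meixner}), each $\phi(t)$ is free Meixner. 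So the plan here is simply to chain \eqref{Phi-rho-1+t} with Lemma~\ref{Lemma:Meixner}.

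For ``only if'' I would work throughout in the standing normalization in which $\phi$, and hence $\rho$ (which shares its mean and covariance), is centered with variance $v>0$. This is harmless, because free convolution, the free Meixner class, and the displayed form of $\psi$ are all covariant under the normalizing change of variable, and it is essential, since centering makes $D\CumFun{\rho}{z}$ vanish at the origin. Write $p = D\CumFun{\rho}{z}$ and $q = D\CumFun{\psi}{z}$, where $D$ is the left non-commutative derivative, so that $\CumFun{\rho}{z} = z\,p$ and $\CumFun{\psi}{z} = z\,q$ because both cumulant series vanish at $0$. The first step is to translate the free Meixner hypothesis on each $\phi(t)$ into a single identity for $p$. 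Applying the un-normalized analogue of the Proposition that expresses $D^2\CumFun{\phi,\psi}{z}$ through the free Meixner data of $\phi$ (obtained verbatim from its proof, with the normalizing constant $\delta_{ij}$ replaced by the covariance of $\phi$) to the pair $(\phi(t),\psi^{\boxplus t})$, whose two-state cumulant is $\CumFun{\rho^{\boxplus t}}{z} = t\CumFun{\rho}{z}$ and whose covariance is $tv$, and then dividing by $t$, I obtain for each $t>0$ and suitable $B(t),C(t)$
\[
D p = v + B(t)\,p + (1+C(t))\,t\,p^2 - t\,q\,p .
\]
The constant $v$ is independent of $t$ \emph{precisely} because of the centering; this is the crucial point.

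Since the left-hand side $Dp = D^2\CumFun{\rho}{z}$ does not depend on $t$, subtracting this identity at two parameters $t,t_0$ cancels the constant and leaves $q\,p$ in $\Span{p, p^2}$; thus $q\,p = \beta_1 p + \beta_2 p^2$ for constants $\beta_1,\beta_2$. As $p\neq 0$ in the integral domain of power series, I cancel $p$ to get $q = \beta_1 + \beta_2 p$, whence $\CumFun{\psi}{z} = z\,q = \beta_1 z + \beta_2\,z\,p = \beta_1 z + \beta_2\CumFun{\rho}{z}$; that is, $\psi = \rho^{\boxplus\beta_2}\boxplus\delta_{\beta_1}$, with $\beta_2\geq 0$ because $\psi$ is a freely infinitely divisible state, so $\beta_2 = 1+s$ and $\beta_1 = \alpha$. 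Substituting $q = \beta_1 + \beta_2 p$ back turns $-t\,q\,p$ into $-t\beta_1 p - t\beta_2 p^2$, and collecting the coefficients of $1,p,p^2$ (linearly independent because $\rho$ is non-degenerate) forces each bracket to be $t$-independent, giving $D^2\CumFun{\rho}{z} = v + b_0 p + c_0 p^2$, i.e.\ \eqref{free-quadratic-PDE} for $\rho$. Hence $\rho$ is free Meixner, completing ``only if''. The closing assertion is then immediate: $\psi=\delta_0$ and $\psi=\rho$ are the fibers $(s,\alpha)=(-1,0)$ and $(0,0)$ of the characterized family, whereas a generic $\psi$ is not of this form.

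The step I expect to be the real obstacle is the first one: setting up the correct un-normalized two-state quadratic identity and, above all, verifying that after dividing by $t$ its constant term $v$ is genuinely $t$-free. This is exactly where centering is indispensable: without it the covariance term of $\phi(t)$ picks up extra $t$-dependence through $B(t)$ and $C(t)$, the constant no longer cancels in the $t$-difference, and a spurious term proportional to $1/p$ appears in $q$, corresponding to states $\psi$ outside the asserted one-parameter family. So the bookkeeping that reduces to the centered case is carrying the genuine weight; once the constant is pinned, the remainder is the routine linear algebra in $\Span{1, p, p^2}$ sketched above.
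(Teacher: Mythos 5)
Your argument is correct, and the ``only if'' direction takes a genuinely different route from the paper's. The paper works directly with the Boolean quadratic relation $D^2\eta^{\phi(t)} = t + b(t)D\eta^{\phi(t)} + c(t)(D\eta^{\phi(t)})^2$: it shows the resulting moment recursions at orders $1,2$ pin down $b(t), c(t)$, that the higher-order recursions then determine every $\psi$-cumulant from $\eta^{\phi}$ and the first two $\psi$-cumulants, concludes $\psi = \rho^{\boxplus(1+s)}\boxplus\delta_{\alpha}$ by this uniqueness, and only then deduces that $\rho = \mf{B}_{-\alpha,-s}[\phi]$ is free Meixner from the Meixner-preservation of $\mf{B}_{\alpha,t}$. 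You instead invoke the (unnumbered) Proposition of Section~\ref{Section:Meixner} expressing $D_iD_j\CumFun{\phi,\psi}{\mb{z}}$ through the Meixner data of $\phi$ --- a result the paper proves but never uses in its own argument --- apply it to the pair $(\phi(t),\psi^{\boxplus t})$ whose two-state cumulant is $t\CumFun{\rho}{z}$, and difference in $t$ to place $D\CumFun{\psi}{z}\,D\CumFun{\rho}{z}$ in $\Span{D\CumFun{\rho}{z},\,(D\CumFun{\rho}{z})^2}$; cancelling the nonzero series $D\CumFun{\rho}{z}$ yields $\psi = \delta_{\beta_1}\boxplus\rho^{\boxplus\beta_2}$ in one step, and back-substitution gives equation~\eqref{free-quadratic-PDE} for $\rho$ directly. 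What your route buys is transparency: the $t$-independence of $\CumFun{\rho}{z}$ does all the work, and you avoid the paper's implicit step that the candidate $\psi$ actually satisfies the same infinite system of recursions; what the paper's route buys is that it never needs the un-normalized two-state identity, staying entirely at the level of cumulant recursions. Two points you should spell out rather than assert: (i) in the ``if'' direction you apply Lemma~\ref{Lemma:Meixner}(a) to the un-normalized $\rho^{\boxplus t}$, whereas the lemma is stated for normalized $\mu_{b,c}$ --- the clean fix is the paper's, namely pulling the $\boxplus t$ outside as $\uplus t$ via Theorem~\ref{Thm:Evolution}(a) before invoking the normalized statement; (ii) the un-normalized constant term being exactly the covariance $tv$ follows because, for a centered distribution, comparing constant terms in any relation of the form $D^2\eta = \mathrm{const} + bD\eta + c(D\eta)^2$ forces the constant to equal the variance --- this is precisely the point you flag as crucial, and it deserves the one-line verification.
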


\begin{proof}
If $\rho = \mu_{b,c}$ is freely infinitely divisible, so that $c \geq 0$, then
\[
\phi(t) = \State{\mu_{b,c}^{\boxplus t}, \mu_{b,c}^{\boxplus (1+s) t} \boxplus \delta_{\alpha t}}
= \State{\mu_{b,c}, \mu_{b,c}^{\boxplus (1+s) t} \boxplus \delta_{\alpha t}}^{\uplus t}
= \mu_{b + \alpha t, -1 + c + (1+s) t}^{\uplus t}
\]
is a free Meixner distribution, with $-1 + c + (1+s) t \geq -1$. For the converse, suppose that all $\phi(t)$ are free Meixner. Normalize $\phi$ to have mean zero and variance $1$. Then
\begin{equation}
\label{PDE-one-variable}
D^2 \eta^{\phi(t)}(z) = t + b(t) D \eta^{\phi(t)} (z) + c(t) \Bigl( D \eta^{\phi(t)}(z) \Bigr)^2.
\end{equation}
To simplify notation, we write $x_{u(1)}, \ldots, x_{u(n)}$, although in the single-variable context all of these are equal to $x$. We know that
\[
\begin{split}
\eta^{\phi}[x_{u(1)}, x_{u(2)}, \ldots, x_{u(n)}]
& = \sum_{\pi \in \NC'(n)} \Cum{\rho}{x_i: i \in B^o(\pi)} \prod_{C \in \Inner(\pi)} \Cum{\psi}{x_i: i \in C} \\
& = \Cum{\rho}{x_{u(1)}, x_{u(2)}, \ldots, x_{u(n)}} + \text{ products of lower order terms}.
\end{split}
\]
Therefore given $\phi$, each $\Cum{\rho}{\cdot}$ is uniquely determined by the lower order $\Cum{\psi}{\cdot}$. Thus
\[
\eta^{\phi(t)}[x_{u(1)}, x_{u(2)}, \ldots, x_{u(n)}] = \sum_{\pi \in \NC'(n)} t^{\abs{\pi}} \Cum{\rho}{x_i: i \in B^o(\pi)} \prod_{C \in \Inner(\pi)} \Cum{\psi}{x_i: i \in C}.
\]
can be expressed in terms of $\eta^{\phi}[\cdot]$ and $\Cum{\psi}{\cdot}$ of order $n$ and lower. On the other hand, from equation~\eqref{PDE-one-variable} we get
\[
\begin{split}
\eta^{\phi(t)}[x_j, x_i, x_{u(1)}, \ldots, x_{u(n)}]
& = b(t) \eta^{\phi(t)}[x_k, x_{u(1)}, \ldots, x_{u(n)}] \\
&\quad + \sum_{s=0}^n c(t) \eta^{\phi(t)}[x_i, x_{u(1)}, \ldots, x_{u(s)}] \eta^{\phi(t)}[x_j,x_{u(s+1)}, \ldots, x_{u(n)}].
\end{split}
\]
These equations for $n = 1, 2$ determine $b(t)$ and $c(t)$ uniquely in terms of $\eta^{\phi}[\cdot]$ and $\Cum{\psi}{\cdot}$; in fact,
\[
b(t) = b(1) + (t-1) \Cum{\psi}{x}, \qquad c(t) = \frac{1}{t} \left( c(1) + (t-1) \Cum{\psi}{x,x} \right).
\]
Therefore these equations for larger $n$ determine all the $\psi$-cumulants in terms of $\eta^{\phi}[\cdot]$ and the $\psi$-cumulants of order $1$ and $2$.
If the mean of $\psi$ is $\alpha$ and variance $1+s$, then $\psi = \rho^{\boxplus (1+s)} \boxplus \delta_{\alpha}$ has the correct first two cumulants, and therefore is the correct state. Finally,
\[
\phi = \State{\rho, \rho^{\boxplus (1+s)} \boxplus \alpha} = \mf{B}_{\alpha, s}[\rho]
\]
and so $\rho = \mf{B}_{-\alpha, -s}[\phi]$; since $\rho$ is a state, this expression is well defined. Since $\phi$ is free Meixner, it follows that $\rho$ is a free Meixner distribution.
\end{proof}

\begin{Remark}
\label{Remark:Lenczewski}
In addition to monotone convolution, another operation related to $\Phi$ is the orthogonal convolution $\vdash$ of Lenczewski. In fact, in the single variable case,
\[
\State{\mf{B}[\tau], \psi} = \tau \vdash \psi.
\]
Some results in this paper can be reformulated as multivariate versions of the results in \cite{Lenczewski-Decompositions-convolution}. For example, our equation~\eqref{Phi-monotone} in Lemma~\ref{Lemma:Monotone-Phi} can be extended to
\[
\State{\tau \rhd \psi} = \State{\mf{B}[\State{\tau}], \psi} = \State{\tau} \vdash \psi,
\]
closely related to Corollary~6.4 of \cite{Lenczewski-Decompositions-convolution}. In Theorem~\ref{Thm:Phi-properties}, the positivity of $\Phi$ in part (b) corresponds to the positivity of the orthogonal convolution; this also explains why the first argument of $\Phi$ is naturally taken to be freely infinitely divisible. Parts (d) and (e) of that theorem imply that
\[
\mu_{b, -1} \vdash \psi = \mf{B}_{b,0}[\State{\psi}]
\]
and
\[
\tau \vdash \delta_a = \mf{B}_{a,0}[\tau],
\]
which are Examples 6.2 and 6.1 of \cite{Lenczewski-Decompositions-convolution}, respectively. Similarly, Theorem~\ref{Thm:Evolution} can be re-formulated in terms of the orthogonal convolution. In fact, some results in \cite{Lenczewski-Decompositions-convolution} are obtained by complex-analytic methods which apply to measures with possibly infinite moments, and can be used to obtain extensions of our results in the single-variable context.
\end{Remark}

\subsection{Operator models}
\label{Subsec:Operator-models}

\begin{Lemma}
\label{Lemma:Basic-operator-representations}
For a Hilbert space $\mc{H}$, by its Boolean Fock space we mean the Hilbert space $\mf{C} \Omega \oplus \mc{H}$ and by its full Fock space the Hilbert space $\mc{F}(\mc{H}) = \mf{C} \Omega \oplus \bigoplus_{n=1}^\infty \mc{H}^{\otimes n}$. For $\zeta \in \mc{H}$, its Boolean creation and annihilation operators on the Boolean Fock space are
\begin{align*}
a_\zeta^{b,+}(\eps) &= \ip{\Omega}{\eps} \zeta, \\
a_\zeta^{b,-}(\eps) &= \ip{\zeta}{\eps} \Omega,
\end{align*}
and its free creation and annihilation operators on the full Fock space are
\begin{align*}
a_{\zeta}^{f,+}(\eps_1 \otimes \ldots \otimes \eps_n) &= \zeta \otimes \eps_1 \otimes \ldots \otimes \eps_n, \\
a_{\zeta}^{f,-}(\eps_1 \otimes \ldots \otimes \eps_n) &= \ip{\zeta}{\eps_1} \eps_2 \otimes \ldots \otimes \eps_n.
\end{align*}
For $H \in \mc{L}(\mc{H})$, it acts on the Boolean Fock space by $H \Omega = 0$, and its gauge operator on the full Fock space is
\[
p(H) (\eps_1 \otimes \ldots \otimes \eps_n) = (H \eps_1) \otimes \eps_2 \otimes \ldots \otimes \eps_n.
\]
Finally, denote by $P_\Omega$ the projection on $\Omega$.
\begin{enumerate}
\item
A state $\psi$ corresponds to a collection of data
\[
(\mc{K}, \xi \in \mc{K}, K_i \in \mc{L}(\mc{K})),
\]
where $\mc{K}$ is a Hilbert space, $\xi \in \mc{K}$ a unit vector, and $\set{K_1, K_2, \ldots, K_d} \in \mc{L}(\mc{K})$ a $d$-tuple of symmetric operators with a common invariant dense domain containing $\xi$, via
\[
\psi[x_{\vec{u}}] = \ip{\xi}{K_{\vec{u}} \xi}.
\]
Conversely, any such collection always gives a state. (We will omit the last comment and the conditions on the operators in subsequent constructions.)
\item
A state $\phi$ also corresponds to a collection of data
\[
(\mc{K}, \eps_i \in \mc{K}, S_i \in \mc{L}(\mc{K}), \alpha_i \in \mf{R})
\]
as follows: on the Boolean Fock space $\mf{C} \Omega \oplus \mc{K}$,
\[
\phi[x_{\vec{u}}] = \ip{\Omega}{\Bigl(a_{\eps_i}^{b,+} + a_{\eps_i}^{b,-} + S_i + \alpha_i P_\Omega \Bigr)_{\vec{u}} \Omega}.
\]
Here $\ip{\eps_i}{S_{\vec{u}} \eps_j}$ are the Boolean cumulants of $\phi$.
\item
A conditionally positive definite functional $\mu$ corresponds to a collection of data
\[
(\mc{H}, \zeta_i \in \mc{H}, H_i \in \mc{L}(\mc{H}), \lambda_i \in \mf{R})
\]
via
\[
\mu \left[ x_i x_{\vec{u}} x_j \right] = \ip{\zeta_i}{ H_{\vec{u}} \zeta_j}, \qquad \mu[x_i] = \lambda_i.
\]
\item
A freely infinitely divisible state $\rho$ corresponds to a collection of data
\[
(\mc{H}, \zeta_i \in \mc{H}, H_i \in \mc{L}(\mc{H}), \lambda_i \in \mf{R})
\]
as follows: on the full Fock space $\mc{F}(\mc{H})$,
\[
\rho[x_{\vec{u}}] = \ip{\Omega}{\Bigl(a_{\zeta_i}^{f,+} + a_{\zeta_i}^{f,-} + p(H_i) + \lambda_i I \Bigr)_{\vec{u}} \Omega}.
\]
Here $\ip{\zeta_i}{H_{\vec{u}} \zeta_j}$ are the free cumulants of $\rho$.
\end{enumerate}
\end{Lemma}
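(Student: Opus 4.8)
The plan is to treat the four parts as two standard constructions: a GNS/Kolmogorov decomposition (for (a) and (c)), and a Fock-space vacuum computation matched against a partition expansion (for (b) and (d)), with (d) bootstrapped from (c). For part (a) I would run the usual GNS construction: equip $\mf{C}\langle \mb{x}\rangle$ with the pre-inner-product $\ip{P}{Q} = \psi[P^\ast Q]$, quotient by its null space, and complete to obtain $\mc{K}$; take $\xi$ to be the class of $1$ and $K_i$ to be left multiplication by $x_i$. Since each $x_i$ is self-adjoint in the $\ast$-algebra, $\ip{K_i P}{Q} = \ip{P}{K_i Q}$, so the $K_i$ are symmetric on the common invariant dense domain of polynomial classes, and $\psi[x_{\vec u}] = \ip{\xi}{K_{\vec u}\xi}$ by construction. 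The converse is immediate, as $\psi[P^\ast P] = \norm{P(\mb{K})\xi}^2 \geq 0$ and $\psi[1] = \norm{\xi}^2 = 1$.

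For part (c) the key observation is that conditional positive definiteness is exactly positivity of $\mu$ on the ideal $\mc{N}$ of polynomials without constant term. I would build a Kolmogorov decomposition of the resulting kernel: put $\ip{A}{B} = \mu[A^\ast B]$ on $\mc{N}$, quotient and complete to get $\mc{H}$, and set $\zeta_i$ to be the class of $x_i$ and $H_k$ to be left multiplication by $x_k$. Every monomial of positive degree factors as $x_i\cdot(\text{rest})$, so $\mc{N}$ is spanned by such products and the classes of $x_{\vec u}x_j$ span $\mc{H}$; well-definedness and symmetry of $H_k$ follow from the Cauchy--Schwarz inequality for the kernel, exactly as in (a). Then $\mu[x_i x_{\vec u} x_j] = \ip{\zeta_i}{H_{\vec u}\zeta_j}$ and $\mu[x_i] = \lambda_i$ hold by definition.

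Part (b) I would obtain by decomposing the GNS representation of (a) along the cyclic vector. Write the GNS space as $\mf{C}\xi \oplus \mc{K}$ with $\mc{K} = \xi^\perp$ and $P_0$ the orthogonal projection onto $\mc{K}$, and set $\alpha_i = \phi[x_i] = \ip{\xi}{K_i\xi}$, $\eps_i = P_0 K_i\xi \in \mc{K}$, and $S_i = P_0 K_i P_0$. Relative to this splitting the self-adjoint $K_i$ has precisely the block form of $a^{b,+}_{\eps_i} + a^{b,-}_{\eps_i} + S_i + \alpha_i P_\Omega$ on $\mf{C}\Omega\oplus\mc{K}$ under the identification $\xi\leftrightarrow\Omega$, so that $\phi[x_{\vec u}] = \ip{\xi}{K_{\vec u}\xi}$ is the stated vacuum expectation. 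It then remains to identify $\ip{\eps_i}{S_{\vec u}\eps_j}$ with the Boolean cumulants: on the Boolean Fock space a contribution to $\ip{\Omega}{T_{u(1)}\cdots T_{u(n)}\Omega}$ arises only from paths that leave and return to $\Omega$ --- reading right to left, $\Omega$ is held by an $\alpha P_\Omega$ factor or promoted to $\mc{K}$ by a creation operator, carried along by the $S_i$, and returned to $\Omega$ by an annihilation operator --- which organizes the indices into the blocks of an interval partition, each outer excursion contributing $\ip{\eps_i}{S_{\vec u}\eps_j}$; comparison with the interval-partition moment--cumulant formula identifies these entries with $\eta^\phi$.

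The same computation, transported to the full Fock space, yields (d). Since $\rho$ is freely infinitely divisible, $\CumFun{\rho}{\mb{z}}$ is conditionally positive definite, so part (c) applied to $\mu = R^\rho$ furnishes data $(\mc{H}, \zeta_i, H_i, \lambda_i)$ with $\ip{\zeta_i}{H_{\vec u}\zeta_j} = \Cum{\rho}{x_i x_{\vec u} x_j}$ and $\lambda_i = \rho[x_i]$; one then checks that $\ip{\Omega}{(a^{f,+}_{\zeta_i} + a^{f,-}_{\zeta_i} + p(H_i) + \lambda_i I)_{\vec u}\Omega}$ expands over $\NC(n)$, with each block weighted by the corresponding $\ip{\zeta_i}{H_{\vec u}\zeta_j}$ (and singletons by $\lambda_i$), so that these are exactly the free cumulants and the model reproduces $\rho$. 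The one genuinely combinatorial step --- and where I expect the real work --- is this bookkeeping: verifying that the Fock-space matrix elements resolve into the interval- (respectively non-crossing-) partition expansion with blocks weighted by the gauge-operator entries. The GNS and positivity ingredients are routine, and the full-Fock identification in (d) is a standard consequence of Speicher's analysis of the full Fock space once (c) supplies the data.
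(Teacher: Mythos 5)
Your proposal is correct and follows essentially the same route as the paper: GNS for (a), the Sch\"urmann-type Kolmogorov decomposition on polynomials without constant term for (c), and deducing (d) from (c) via conditional positive definiteness of $R^\rho$. The only difference is one of exposition --- the paper simply cites Proposition~15 of \cite{AnsBoolean} for (b) and treats the full-Fock non-crossing-partition expansion in (d) as known, whereas you supply the standard block-decomposition and excursion/partition bookkeeping arguments explicitly; these are the same arguments that underlie the cited results.
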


\begin{proof}
Part (a) is standard; briefly, take $\mc{K}$ to be the completion of the quotient of $\mf{C} \langle \mb{x} \rangle$ with respect to the $\psi$-norm, $\xi$ to be the vector image of $1$, and $K_i$ to be the operator image of $x_i$. Part (b) is Proposition~15 of \cite{AnsBoolean}. Part (c) is also standard, see Proposition 3.2 of \cite{SchurCondPos}; briefly, take $\mc{H}$ to be the completion of the quotient of
\[
\set{P \in \mf{C} \langle \mb{x} \rangle | P(0) = 0}
\]
with respect to the $\mu$-norm, $\zeta_i$ to be the vector image of $x_i$, and $K_i$ to be the operator image of $x_i$. Finally, since a state $\rho$ is freely infinitely divisible if and only if $R^\rho$ is conditionally positive definite, part (d) follows from part (c).
\end{proof}

\begin{proof}[Proof of Theorem~\ref{Thm:c-free-representation}]
For $\psi, \mu$ represented as in Lemma~\ref{Lemma:Basic-operator-representations}, on the Hilbert space
\[
\mc{K} \otimes \mc{H},
\]
consider the operators
\[
K_i \otimes I + P_\xi \otimes H_i,
\]
where $P_\xi$ is the projection onto $\xi$ in $\mc{K}$. We will show that
\begin{equation}
\label{eta-cpd}
\eta \left[ x_i \prod_{s=1}^n x_{u(s)} x_j \right] = \ip{\xi \otimes \zeta_i}{ \prod_{s=1}^n \Bigl(K_{u(s)} \otimes I + P_\xi \otimes H_{u(s)} \Bigr) (\xi \otimes \zeta_j)}.
\end{equation}
The operators $K_i \otimes I + P_\xi \otimes H_i$ are symmetric, therefore it will follow that $\eta$ is conditionally positive definite.

\br
To prove \eqref{eta-cpd}, we note that
\[
\begin{split}
& \ip{\xi \otimes \zeta_i}{ \prod_{s=1}^k \Bigl(K_{u(s)} \otimes I + P_\xi \otimes H_{u(s)} \Bigr) (\xi \otimes \zeta_j)} \\
&\quad = \sum_{\substack{\Lambda \subset \set{1, 2, \ldots, n} \\ \Lambda = \set{v(1), v(2), \ldots, v(l)}}} \ip{\xi}{K_{u(1)} \ldots K_{u(v(1)-1)} P_\xi K_{u(v(1)+1)} \ldots K_{u(v(2)-1)} P_\xi \ldots K_{u(n)} \xi} \\
&\qquad\qquad\qquad\qquad\qquad \ip{\zeta_i}{I \ldots I H_{u(v(1))} I \ldots I H_{u(v(2))} I \ldots I H_{u(v(l))} I \ldots I \zeta_j} \\
&\quad = \sum_{\substack{\Lambda \subset \set{1, 2, \ldots, n} \\ \Lambda = \set{v(1), v(2), \ldots, v(l)}}}
\ip{\zeta_i}{\prod_{s=1}^l H_{u(v(s))} \zeta_j} \prod_{r=0}^l \ip{\xi}{\prod_{s = v(r)+1}^{v(r+1)-1} K_{u(s)} \xi} \\
&\quad = \sum_{\substack{\Lambda \subset \set{1, 2, \ldots, n} \\ \Lambda = \set{v(1), v(2), \ldots, v(l)}}} \mu \left[ x_i \prod_{s=1}^l x_{u(v(s))} x_j \right] \prod_{r=0}^l \psi \left[ \prod_{s = v(r)+1}^{v(r+1)-1} x_{u(s)} \right].
\end{split}
\]
It remains to note that
\[
\begin{split}
& 1 + \sum_{n=1}^\infty \sum_{\abs{\vec{u}} = n} \sum_{\substack{\Lambda \subset \set{1, 2, \ldots, n} \\ \Lambda = \set{1, v(1), v(2), \ldots, v(l), n}}} \mu \left[ x_{u(1)} \prod_{s=1}^l x_{u(v(s))} x_{u(n)} \right] \prod_{r=0}^l \psi \left[ \prod_{s = v(r)+1}^{v(r+1)-1} x_{u(s)} \right] w_{\vec{u}} \\
&\quad = 1 + \sum_{n=1}^\infty \sum_{\abs{\vec{u}} = n} \mu \left[ x_{u(1)} x_{u(2)} \ldots x_{u(n)} \right] w_{u(1)} (1 + M^\psi(\mb{w})) w_{u(2)} (1 + M^\psi(\mb{w})) w_{u(3)} \\
&\qquad\qquad\qquad\qquad \ldots (1 + M^\psi(\mb{w})) w_{u(n)} \\
&\quad = 1 + (1 + M^\psi(\mb{w}))^{-1} M^\mu((1 + M^\psi(\mb{w})) \mb{w})
= 1 + M^\eta(\mb{w}). \qedhere
\end{split}
\]
\end{proof}

\begin{Remark}[Relation to monotone probability]
\label{Remark:Monotone}
Lenczewski used a similar construction in Boo\-le\-an probability, and Franz and Muraki \cite{Franz-Muraki-Markov-monotone} in monotone probability: if $\psi$ is a joint distribution of the operators $\set{K_i}$ with respect to the vector state of $\xi$, and $\phi$ is the joint distribution of the operators $\set{H_i}$ with respect to the vector state of $\zeta$, then the joint distribution of the operators $\set{K_i \otimes I + P_\xi \otimes H_i}$ with respect to the vector state of $\xi \otimes \zeta$ is the monotone convolution $\phi \rhd \psi$. This provides an operator representation proof of Lemma~\ref{Lemma:Monotone-Phi}.
\end{Remark}

\begin{Lemma}
\label{Lemma:B-Phi-representations}
We use the notation of Lemma~\ref{Lemma:Basic-operator-representations}.
\begin{enumerate}
\item
For a freely infinitely divisible state $\rho$ and a state $\psi$, the state $\State{\rho, \psi}$ is represented on the Boolean Fock space
\[
\mf{C} \Omega \oplus (\mc{K} \otimes \mc{H})
\]
as
\[
\State{\rho, \psi}[x_{\vec{u}}] = \ip{\Omega}{\Bigl(a_{\xi \otimes \zeta_i}^{b,+} + a_{\xi \otimes \zeta_i}^{b,-} + K_i \otimes I + P_\xi \otimes H_i + \lambda_i P_\Omega \Bigr)_{\vec{u}} \Omega}.
\]
\item
For a state $\phi$, the freely infinitely divisible state $\mf{B}[\phi]$ is represented on the full Fock space $\mc{F}(\mc{K})$ as
\[
\mf{B}[\phi][x_{\vec{u}}] = \ip{\Omega}{\Bigl(a_{\eps_i}^{f,+} + a_{\eps_i}^{f,-} + p(S_i) + \alpha_i I \Bigr)_{\vec{u}} \Omega}.
\]
\item
For $\rho$, $\psi$ as above, the state $\psi \boxplus \rho$ is represented on the space
\[
\mc{F}(\mc{K} \otimes \mc{H}) \otimes \mc{K} \simeq \mc{K} \oplus \Bigl( \bigoplus_{n=1}^\infty (\mc{K} \otimes \mc{H})^{\otimes n} \otimes \mc{K} \Bigr)
\]
as
\[
(\psi \boxplus \rho)[x_{\vec{u}}] = \ip{\xi}{ \Bigl(a_{\xi \otimes \zeta_i}^{f,+} \otimes I + a_{\xi \otimes \zeta_i}^{f,-} \otimes I + p(K_i \otimes I) \otimes I + p(P_\xi \otimes H_i) \otimes I + \lambda_i I \Bigr)_{\vec{u}} \xi}.
\]
\end{enumerate}
\end{Lemma}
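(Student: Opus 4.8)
The plan is to prove all three parts by \emph{matching}, rather than recomputing: by Lemma~\ref{Lemma:Basic-operator-representations} each of the states in question is determined by a single sequence of (Boolean or free) cumulants, so in each case it suffices to check that the displayed operators reproduce exactly those cumulants in the indicated vector state.

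\textbf{Part (a).} I would read this off from the moment computation already carried out in the proof of Theorem~\ref{Thm:c-free-representation}, combined with Lemma~\ref{Lemma:Basic-operator-representations}(b). Write $\phi = \State{\rho, \psi}$. Since $\rho$ is freely infinitely divisible, $R^\rho$ is conditionally positive definite and is represented as the functional $\mu$ of Lemma~\ref{Lemma:Basic-operator-representations}(c) by the data $(\mc{H}, \zeta_i, H_i, \lambda_i)$, with $\ip{\zeta_i}{H_{\vec{u}} \zeta_j}$ the free cumulants of $\rho$. Taking $M^\mu = R^\rho$ makes the functional $\eta$ of Theorem~\ref{Thm:c-free-representation} equal to $\eta^\phi$, so the identity \eqref{eta-cpd} becomes
\[
\eta^\phi\left[x_i \prod_{s=1}^n x_{u(s)}\, x_j\right] = \ip{\xi \otimes \zeta_i}{\prod_{s=1}^n \Bigl(K_{u(s)} \otimes I + P_\xi \otimes H_{u(s)}\Bigr)(\xi \otimes \zeta_j)}.
\]
Thus these kernels are precisely the higher Boolean cumulants of $\phi$, while the first Boolean cumulant is $\phi[x_i] = \rho[x_i] = \lambda_i$, since $\phi$ has the same mean as $\rho$ (Theorem~\ref{Thm:Phi-properties}(b)). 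Feeding the data $(\mc{K} \otimes \mc{H},\ \xi \otimes \zeta_i,\ K_i \otimes I + P_\xi \otimes H_i,\ \lambda_i)$ into Lemma~\ref{Lemma:Basic-operator-representations}(b) gives the claimed realization on $\mf{C}\Omega \oplus (\mc{K} \otimes \mc{H})$.

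\textbf{Part (b)} is immediate from the defining property of the bijection: $R^{\mf{B}[\phi]} = \eta^\phi$, so the free cumulants of $\mf{B}[\phi]$ coincide with the Boolean cumulants of $\phi$. If $(\mc{K}, \eps_i, S_i, \alpha_i)$ is the Boolean model of $\phi$, then $\ip{\eps_i}{S_{\vec{u}} \eps_j}$ are the Boolean cumulants of $\phi$, hence the free cumulants of $\mf{B}[\phi]$, and $\alpha_i = \phi[x_i] = \mf{B}[\phi][x_i]$. So the \emph{same} data serves as the free-cumulant data in Lemma~\ref{Lemma:Basic-operator-representations}(d), producing the stated realization on $\mc{F}(\mc{K})$.

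\textbf{Part (c)} is the substantive one, and I would prove it by a direct moment computation in the vector state at $\xi$ (i.e. at $\Omega \otimes \xi$ under the stated identification), organized by non-crossing partitions in the spirit of Theorem~\ref{Thm:c-free-representation}. Expanding each factor $T_i$ into its five summands, I expect the free creation, annihilation, scalar, and the gauge $p(P_\xi \otimes H_i)$ over the $\mc{H}$-component to generate the usual non-crossing pattern producing the free cumulants $\ip{\zeta_i}{H_{\vec{w}} \zeta_j}$ and $\lambda_i$ of $\rho$, while the runs of $p(K_i \otimes I)$ acting on the $\mc{K}$-component of the spine accumulate to moments $\ip{\xi}{K_{\vec{v}} \xi} = \psi[x_{\vec v}]$ of $\psi$; matching this with the expansion
\[
(\psi \boxplus \rho)[x_{\vec u}] = \sum_{\pi \in \NC(n)} \prod_{B \in \pi} \Cum{\psi \boxplus \rho}{x_i : i \in B}
\]
via $R^{\psi \boxplus \rho} = R^\psi + R^\rho$ (equation~\eqref{Free-convolution}) and \eqref{R-cumulant-moment} should close the argument. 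The main obstacle is precisely the role of the final tensor factor $\mc{K}$ and the vector $\xi$: a naive factorization of $\ip{\Omega \otimes \xi}{(\,\cdot\,)\otimes I\,(\Omega \otimes \xi)}$ would only reproduce $\mf{B}[\State{\rho, \psi}]$, which is \emph{not} $\psi \boxplus \rho$, so the heart of the proof is to verify that the $\mc{K}$-factor genuinely couples the $\psi$-moments onto the free cumulants of $\rho$ in exactly the additive pattern of \eqref{Free-convolution}, rather than the Boolean-type weaving that governed part (a).
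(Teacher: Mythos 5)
Your proposal is correct and takes essentially the same route as the paper: parts (a) and (b) are proved there by precisely the citations you give (Theorem~\ref{Thm:c-free-representation} combined with Lemma~\ref{Lemma:Basic-operator-representations}(b), and $\CumFun{\mf{B}[\phi]}{\mb{z}}=\eta^\phi(\mb{z})$ combined with Lemma~\ref{Lemma:Basic-operator-representations}(b),(d)). For part (c) the paper is no less terse than you are --- it simply quotes the identity expressing $(\psi\boxplus\rho)[x_{\vec{u}}]$ as a sum over collections of disjoint blocks of $\rho$-cumulants times $\psi$-moments on the non-crossing complement, which is exactly the resummation of your planned $\CumFun{\psi\boxplus\rho}{\mb{z}}=\CumFun{\psi}{\mb{z}}+\CumFun{\rho}{\mb{z}}$ expansion and is precisely what the displayed operator model computes term by term, with the extra right-hand $\mc{K}$ factor supplying the $\psi$-moments you correctly identify as the crux.
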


\begin{proof}
Part (a) follows by combining Theorem~\ref{Thm:c-free-representation} with part (b) of Lemma~\ref{Lemma:Basic-operator-representations}. Part (b) follows from the definition that
\[
\CumFun{\mf{B}[\phi]}{\mb{z}} = \eta^{\phi}(\mb{z})
\]
and parts (b), (d) of Lemma~\ref{Lemma:Basic-operator-representations}. Part (c) follows from the fact that
\[
(\psi \boxplus \rho)[x_1, x_2, \ldots, x_n] = \sum_{\substack{B_1, B_2, \ldots, B_k \subset \set{1, \ldots, n} \\ B_i \cap B_j = \emptyset \text{ for } i \neq j}} \prod_{i=1}^k \Cum{\rho}{x_j: j \in B_i} \prod_{C \in (B_1, B_2, \ldots, B_k)^c} \psi[x_j: j \in C],
\]
where
\[
(B_1, B_2, \ldots, B_k)^c = (C_1, C_2, \ldots, C_l)
\]
is the smallest collection of disjoint subsets of $\set{1, \ldots, n}$ such that
\[
(B_1, \ldots, B_k, C_1, \ldots, C_l) \in \NC(n). \qedhere
\]
\end{proof}

\begin{Remark}[Operator representation proof of Theorem~\ref{Thm:Evolution}]
Combining parts (a), (b) of the preceding Lemma, the freely infinitely divisible state $\mf{B}[\State{\rho, \psi}]$ is represented on the full Fock space
\[
\mc{F}(\mc{K} \otimes \mc{H}) = \mf{C} \Omega \oplus \bigoplus_{n=1}^\infty (\mc{K} \otimes \mc{H})^{\otimes n}
\]
as
\[
\mf{B}[\State{\rho, \psi}][x_{\vec{u}}] = \ip{\Omega}{\Bigl(a_{\xi \otimes \zeta_i}^{f,+} + a_{\xi \otimes \zeta_i}^{f,-} + p(K_i \otimes I + P_\xi \otimes H_i) + \lambda_i I \Bigr)_{\vec{u}} \Omega}.
\]

\br
On the other hand, combining parts (a), (c), the state $\State{\rho, \psi \boxplus \rho}$ is represented on the space
\[
\mf{C} \Omega \oplus \mc{F}(\mc{K} \otimes \mc{H}) \otimes \mc{K} \otimes \mc{H} \simeq \mc{F}(\mc{K} \otimes \mc{H})
\]
as
\[
\begin{split}
\State{\rho, \psi \boxplus \rho}[x_{\vec{u}}] & = \Bigl\langle \Omega, \Bigl(a_{\xi \otimes \zeta_i}^{b,+} + a_{\xi \otimes \zeta_i}^{b,-} \\
&\qquad + \bigl(a_{\xi \otimes \zeta_i}^{f,+} \otimes I + a_{\xi \otimes \zeta_i}^{f,-} \otimes I + p(K_i \otimes I) \otimes I + p(P_\xi \otimes H_i) \otimes I + \lambda_i I \bigr) \otimes I \\
&\qquad + I \otimes P_\xi \otimes H_i + \lambda_i P_\Omega \Bigr)_{\vec{u}} \Omega \Bigr\rangle \\
& = \ip{\Omega}{\Bigl(a_{\xi \otimes \zeta_i}^{f,+} + a_{\xi \otimes \zeta_i}^{f,-} + p(K_i \otimes I) + p(P_\xi \otimes H_i) + \lambda_i I \Bigr)_{\vec{u}} \Omega}.
\end{split}
\]
Thus $\mf{B}[\State{\rho, \psi}] = \State{\rho, \psi \boxplus \rho}$. The more general equation involving $\mf{B}_t$ follows from this by using
\[
\mf{B}_t[\phi] = \mf{B}[\phi^{\uplus t}]^{\uplus (1/t)},
\]
which in turn can be deduced from the semigroup property of $\set{\mf{B}_t}$.

\br
There are proofs by similar methods of other results in the paper, such as the remaining parts of Theorem~\ref{Thm:Evolution} and Lemma~\ref{Lemma:B_t-semigroup}; they are left to an interested reader.
\end{Remark}


\providecommand{\bysame}{\leavevmode\hbox to3em{\hrulefill}\thinspace}
\providecommand{\MR}{\relax\ifhmode\unskip\space\fi MR }
\providecommand{\MRhref}[2]{%
  \href{http://www.ams.org/mathscinet-getitem?mr=#1}{#2}
}
\providecommand{\href}[2]{#2}

\end{document}